\theoremstyle{plain}
\newtheorem{thm}{Theorem}[section]
\theoremstyle{plain}
\newtheorem{lemma}[thm]{Lemma}
\newtheorem{prop}[thm]{Proposition}
\theoremstyle{definition}
\newtheorem*{remark}{Remark}
\newcommand{\B}{\ensuremath{\mathcal{B}}}
\newcommand{\M}{\ensuremath{\mathcal{M}}}
\newcommand{\eps}{\ensuremath{\varepsilon}}
\newcommand{\R}{\ensuremath{\mathbb{R}}}
\newcommand{\be} {\begin{equation}}
\newcommand{\ee} {\end{equation}}
\newcommand{\bea} {\begin{eqnarray}}
\newcommand{\eea} {\end{eqnarray}}
\newcommand{\Bea} {\begin{eqnarray*}}
\newcommand{\Eea} {\end{eqnarray*}}
\newcommand{\goto}{\ensuremath{\rightarrow}}
\newcommand{\grad}{\ensuremath{\nabla}}
\newcommand{\De} {\Delta}
\newcommand{\cpn} {\mathbb{CP}^{N}}
\newcommand{\pl} {\partial}
\numberwithin{equation}{section} \allowdisplaybreaks
\begin{document}
%
%

\title{On the solutions of a singular elliptic equation concentrating on a circle }

\author{ B. B. Manna\footnote{ B. B. Manna, TIFR CAM , Bangalore, email: bhakti@math.tifrbng.res.in} ,
P. N. Srikanth \footnote{P. N. Srikanth ,TIFR CAM , Bangalore, email: srikanth@math.tifrbng.res.in}}


\date{\today}
\maketitle
\begin{abstract} Let $A=\{x\in \R^{2N+2} : 0< a< |x| <b\}$ be an annulus. Consider the following singularly perturbed elliptic problem on $A$
 \begin{equation}
  \begin{array}{lll}
  -\eps^2{\De u} + |x|^{\alpha}u = |x|^{\alpha}u^p,  &\mbox{ \qquad in } A \notag\\
  u>0   &\mbox{ \qquad in }  A \\
  \frac{\partial u}{\partial\nu} = 0  &\mbox{   \qquad on } \partial A\\
  \end{array}
 \end{equation}
$1<p<2^*-1$. 
We shall show that there exists a positive solution $u_\eps$ concentrating on an $S^1$  orbit as $\eps\to 0$. We prove this by reducing the problem 
to a lower dimensional one and analyzing a single point concentrating solution in the lower dimensional space. We make precise how the single 
peak concentration depends on the parameter $\alpha$. 
\end{abstract}

\section{Introduction}
Consider the following singularly perturbed elliptic equation with super linear nonlinearity on an annulus in $\R^{2N}$
 \begin{equation}
  \begin{array}{lll}
  -\eps^2{\De u} + |x|^{\alpha}u = |x|^{\alpha}u^p,  &\mbox{ \qquad in } A \label{E0.1}\\
  u>0   &\mbox{ \qquad in }  A \\
  \frac{\partial u}{\partial\nu} = 0  &\mbox{   \qquad on } \partial A\\
  \end{array}
 \end{equation}
$1<p<2^*-1$, $\eps$ is a singular perturbation parameter. $A=\{x\in \R^{2N+2} : 0< a< |x| <b\}$. $\alpha$ any real number.  

The result of point concentration on bounded domains has been well established by several authors \cite{MR1115095, MR1219814, MR1639546}. In these 
works, the behavior of the least energy solutions and there concentration phenomena has been studied. For the Dirichlet problem W.-M. Ni and J. Wei
\cite{MR1639546}, have shown that the least energy solution can have at most one local maximum and the point of maximum converges to a point 
which stays at maximum distance from the boundary. W.-M. Ni and Takagi in \cite{MR1115095, MR1219814} have analyzed the Neumann problem, where they
have also shown that a least energy solution can have at most one local maximum but it will lie on the boundary for sufficiently small $\eps$ and
it will converge to a point of maximum mean curvature of the boundary. Later J. Byeon and J. Park in \cite{MR2180862} have generalized the same 
results for both boundary conditions on a Riemannian manifold. 

Also the $N-1$ dimensional (sphere) concentration of the problem in the presence of a potential has been studied by A. Ambrosetti, A. Malchiodi 
and W.-M. Ni in \cite{MR2056434} where they have looked at the radial solutions and established the concentration phenomena which
depends upon the behavior of the potential. 

The inspiration for the work comes from the result by Bernhard Ruf and P. N. Srikanth \cite{MR2608946} where the authors have found a 
solution concentrating on a circle in the case of Dirichlet data. The problem was considered in dimension 4 and using the $S^1$ action on $S^3$ the problem is reduced to another 
Singularly perturbed elliptic problem on an annulus in dimension 3. In a recent work, \cite{arXiv:1210.0782} Pacella and Srikanth have generalized this 
result to find solutions concentrating on $S^{N-1}$ orbit where the domain is an annulus in $\R^{2N}$.

We adapt ideas from \cite{MR2180862} and \cite{MR2608946} in the present case where the reduced problem is studied on a warped-product manifold.
The main theorem we prove here is the following:

\begin{thm}\label{Th0.1}
 There is a solution of (\ref{E0.1}) concentrating on an $S^1$ orbit, which lies on the inner boundary for $\alpha<\frac{2}{2N-1}$, on the outer 
 boundary for $\alpha\ge\frac{2}{2N-1}$. 
\end{thm}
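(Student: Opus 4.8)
The idea is to use the $S^{1}$ symmetry of (\ref{E0.1}) to reduce it to a single--point concentration problem on a lower--dimensional warped product, and then to read off the location of the concentration point from the leading term of an energy expansion; the number $\frac{2}{2N-1}$ appears exactly as the value of $\alpha$ at which that leading term stops distinguishing the two boundary components. For the reduction, identify $\R^{2N+2}$ with $\mathbb{C}^{N+1}$ and let $S^{1}$ act by $e^{i\theta}\cdot x=e^{i\theta}x$. Writing $x=r\omega$ with $r=|x|\in(a,b)$, $\omega\in S^{2N+1}$, and using the Hopf fibration $S^{2N+1}\to\mathbb{CP}^{N}$, one identifies the $S^{1}$--invariant functions on $A$ with functions on the warped product $M:=[a,b]\times\mathbb{CP}^{N}$, $g:=dr^{2}+r^{2}g_{FS}$, on which the flat Laplacian (restricted to such functions) becomes the weighted operator $\Delta_{g,\rho}u=\frac{1}{\rho}\,\mathrm{div}_{g}(\rho\,\nabla_{g}u)$ with weight $\rho(r)=r$ coming from the length of the Hopf circle. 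Since the energy functional of (\ref{E0.1}) is $S^{1}$--invariant, the principle of symmetric criticality (Palais) reduces the problem to finding critical points of
\[
E_{\eps}(u)=\int_{M}\Bigl(\tfrac{\eps^{2}}{2}|\nabla_{g}u|^{2}+\tfrac{r^{\alpha}}{2}u^{2}-\tfrac{r^{\alpha}}{p+1}|u|^{p+1}\Bigr)\rho\,dV_{g},
\]
i.e.\ to solutions of the singularly perturbed Neumann problem $-\eps^{2}\Delta_{g,\rho}u+r^{\alpha}u=r^{\alpha}u^{p}$ on $M$; the range $1<p<2^{*}-1$ in $\R^{2N+2}$ is a fortiori subcritical for the Sobolev embedding on the $(2N+1)$--dimensional $M$.

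\textbf{A single boundary peak.} Adapting the arguments of Ni--Takagi \cite{MR1115095,MR1219814} and their Riemannian version of Byeon--Park \cite{MR2180862} to the weighted manifold $M$, I would obtain, for $\eps$ small, a positive non-constant solution $u_{\eps}$ by minimising $E_{\eps}$ on its Nehari manifold: the constant $u\equiv 1$ is an explicit critical point whose energy stays bounded away from $0$ as $\eps\to 0$, while the Nehari minimum is $O(\eps^{2N+1})$, so the minimiser is non-constant. The heart of this step is the concentration analysis: uniform $L^{\infty}$ bounds, a single maximum point $\xi_{\eps}=(r_{\eps},[\omega_{\eps}])$, exponential decay of $u_{\eps}$ away from $\xi_{\eps}$, and $C^{1}$--convergence of the Fermi rescaling $u_{\eps}(\xi_{\eps}+\eps\,\cdot\,)$ to the (possibly reflected) ground state $U_{r_{\eps}^{\alpha}}$ of $-\Delta U+U=U^{p}$ on $\R^{2N+1}$. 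A standard comparison --- a half--space peak carries only half the energy of a whole--space one --- then forces $\mathrm{dist}(\xi_{\eps},\partial M)=O(\eps)$, and in fact $\xi_{\eps}\in\partial M$ for $\eps$ small, so $r_{\eps}\in\{a,b\}$.

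\textbf{Selecting the boundary component.} A cut-off, Fermi--rescaled test function centred at a boundary point with radial coordinate $r_{0}$ has energy
\[
E_{\eps}=\pi\,E(U)\,\eps^{2N+1}\,r_{0}^{\,1-\frac{\alpha(2N-1)}{2}}+O(\eps^{2N+2}),
\]
because in $g$--normal coordinates the measure $\rho\,dV_{g}$ contributes only the value $\rho(r_{0})=r_{0}$ at leading order, while the common weight $r^{\alpha}$ of the linear and the nonlinear terms rescales the ground--state energy by the factor $r_{0}^{-\alpha(2N-1)/2}$. Matching this with $E_{\eps}(u_{\eps})$ from the previous step (which, up to $o(\eps^{2N+1})$, equals the infimum of the above over $r_{0}\in\{a,b\}$, interior peaks being strictly more expensive) forces $r_{\eps}\to a$ when $1-\frac{\alpha(2N-1)}{2}>0$, i.e.\ when $\alpha<\frac{2}{2N-1}$, and $r_{\eps}\to b$ when $\alpha>\frac{2}{2N-1}$. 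In the borderline case $\alpha=\frac{2}{2N-1}$ the two leading terms coincide, and one must carry the expansion to order $\eps^{2N+2}$: this produces a correction $-c_{1}\eps^{2N+2}H_{\mathrm{eff}}(r_{0})$ with $c_{1}>0$, where $H_{\mathrm{eff}}$ is the mean curvature of $\partial M$ in $(M,g)$ with respect to the outward normal, adjusted by the normal logarithmic derivatives of $\rho$ and of $r^{\alpha}$ at the boundary; since the level set $\{r=r_{0}\}$ of the warped metric is convex from outside $M$ at $r=b$ and concave at $r=a$, one gets $H_{\mathrm{eff}}(b)>0>H_{\mathrm{eff}}(a)$, so the outer boundary is still chosen --- whence the ``$\ge$'' at the threshold in the statement. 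Pulling $\xi_{\eps}\in\{r=a\}\times\mathbb{CP}^{N}$ (resp.\ $\{r=b\}\times\mathbb{CP}^{N}$) back through the Hopf fibration yields a solution of (\ref{E0.1}) concentrating on an $S^{1}$ orbit on the inner (resp.\ outer) boundary.

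\textbf{The main difficulty.} The delicate point is not the reduction but the concentration analysis on the \emph{weighted} warped product $M$: obtaining single--peak behaviour, exponential decay, and $C^{1}$--closeness of the rescaled solution to the ground state with enough precision to justify the two--term energy expansion above, and computing $H_{\mathrm{eff}}$ at $r=a$ and $r=b$ for the metric $dr^{2}+r^{2}g_{FS}$ so as to settle the borderline case $\alpha=\frac{2}{2N-1}$. Once these estimates and that expansion are in hand, the rest is bookkeeping.
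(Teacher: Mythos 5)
Your overall architecture matches the paper's --- Hopf reduction of the $S^1$-invariant problem to a $(2N+1)$-dimensional warped product, a non-constant least-energy/mountain-pass solution with a single boundary peak, and selection of the boundary component by comparing the scaled ground-state energies --- and your leading exponent $r_0^{1-\alpha(2N-1)/2}$ is exactly the paper's $\kappa^{(2N-1)/2}=|s(P_0)|^{\eta(2N-1)/2}$ with $\eta=\frac{2-\alpha(2N-1)}{2N}$ once the radial substitution is undone, so the dichotomy at $\alpha=\frac{2}{2N-1}$ comes out the same way. The genuine difference is in how the Hopf-fiber weight is handled. You keep the metric $dr^2+r^2g_{FS}$ and carry the weight $\rho(r)=r$ (and the potential $r^\alpha$) through the whole analysis, which means the Ni--Takagi/Byeon--Park concentration machinery must be re-proved for a weighted operator $\frac{1}{\rho}\mathrm{div}_g(\rho\nabla_g\cdot)$ with a nonconstant coefficient in front of both the linear and nonlinear terms. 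The paper instead makes the substitution $r=(\tfrac{2N-1}{2N})^{1/2N}s^{(2N-1)/(2N)}$, which absorbs $\rho$ into the volume element and produces an \emph{unweighted} singularly perturbed Neumann problem $-\eps^2\Delta_g v+|s|^{-\eta}(v-f(v))=0$ on $I'\times_f\cpn$; this lets it quote the manifold results of Byeon--Park essentially verbatim and isolates all the $\alpha$-dependence in the single exponent $\eta$. Your route is viable but buys nothing and costs a re-derivation of the single-peak estimates in the weighted setting.

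The one place where this cost becomes a real gap is the borderline case $\alpha=\frac{2}{2N-1}$. In the paper this is the case $\eta=0$, where the reduced equation has \emph{constant} coefficients and the known result ``the peak goes to the point of maximal boundary mean curvature'' applies directly, giving the outer boundary from $H=-1/s$. In your formulation the leading terms cancel and you must compute an effective curvature combining the geometric mean curvature of $\{r=r_0\}$ with the normal logarithmic derivatives of $\rho=r$ and of $r^\alpha$; you assert its sign ($H_{\mathrm{eff}}(b)>0>H_{\mathrm{eff}}(a)$) without computing it, and the naive geometric mean curvatures of the two spheres with respect to the inward normals actually point the other way, so the weight contributions are essential and the sign is not obvious. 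You should either carry out that computation or perform the paper's change of variables first, after which the threshold case is free.
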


We can re-write the equation (\ref{E0.1}) as 
\begin{equation}
  \begin{array}{lll}
  -\eps^2{\De u} + |x|^{\alpha}u = |x|^{\alpha}f(u),  &\mbox{ \qquad in }  A \label{E0.2}\\
  u>0   &\mbox{ \qquad in } A \\
  \frac{\partial u}{\partial\nu} = 0  &\mbox{   \qquad on } \partial A\\
  \end{array}
 \end{equation}
 where $f(t)= t^p$, for $t>0$ and $=0$, for $t\le0$. Then any solution of (\ref{E0.2}) is positive and hence a solution of (\ref{E0.1}) also.
 
 The basic idea is to reduce the problem to lower dimension using an $S^1$ action which leads to the Hopf Fibration. Recall that the annulus has 
 an warped product structure as 
 \begin{align}
  I\times S^{2N+1}\notag 
 \end{align}
with the product metric
 \begin{align}
  g_A=dr^2+r^2dS_{2N+1}\label{0.1} 
 \end{align}
. We write the 
 co-ordinates of $S^{2N+1}$ as $(z_1,z_2,...,z_{n+1})$. The the Hopf map $S^1\hookrightarrow S^{2n+1}\rightarrow \mathbb{CP}^{n}$ can be described 
 as $(z_1,z_2,...,z_{n+1}) \rightarrow (\frac{z_1}{z_i},\frac{z_2}{z_i},,...,1,...,\frac{z_{n+1}}{z_i})$ provided $z_i\neq 0$. Also under this 
 transformation $\De_{S^{2N+1}}$ goes to $\De_{\cpn}$ (Details can be found in \cite{MR2250232}). Also choosing a proper scaling of the radius we 
 reduce the problem in a lower dimensional singularly perturbed problem on the warped product manifold $\M=I'\times_f \cpn$ with the product metric 
 $g_\M=ds^2+ \frac{2N-1}{2N}r^2g_{\cpn}^2$. Where $I'=(\frac{2N}{2N-1})^{\frac{1}{2N-1}}(a^{\frac{2N}{2N-1}}, b^{\frac{2N}{2N-1}})$, and $g_{\cpn}$ 
 is the Fubini Study metric on $g_{\cpn}$(for details please look at the appendix).
 
 We shall seek for a solution of the reduced equation and try to get the behavior of the sequence of solutions. We shall prove that as $\eps\goto 0$
 'up-to a subsequence' the solutions concentrates at a single point on the boundary. We lift the solution in the annulus to get solutions 
 concentrating on $S^1$.
 
 \section{The group action and reduction}
 Let $A=\{x\in\R^{2N+2}:0<a<|x|<b\}$ is an annular domain in $\R^{2N+2}$. We can express $A$ as a product manifold $A=I\times_r S^{2N+1}$, where 
 $I=(a,b)$, with the product metric 
 \begin{align}\label{2.1}
 g=dr^2+r^2dS_{2N+1}
 \end{align}Let $H_{0,rad}^1\subset H_0^1(A)$ where $H_{rad}^1$ denote the space 
of radial functions in $H^1(A)$ consists of radial functions.

Consider a suitable co-ordinate representation of The annulus $A$ such that any point $z\in A$ can be written as 
\begin{align}
z\equiv z(r,t_1,...,t_n,\theta_1,...,\theta_{N+1})\notag
\end{align}
where $a<r<b$ and $0\le t_i<\pi/2, (i=1,2,...,n)$ and $0\le \theta_j<2\pi, (j=1,2,...,n+1)$.

 Note that the angles $0\le \theta_i<2\pi(i=1,2)$ represents the angle between $(x_{2i-1},x_{2i})$ in the $x_{2i-1},x_{2i}$, and $0\le t_j<
 \pi/2$ is the angle between the respective planes.
 
 Now consider the following one parameter group action $T_\tau$ on $A$ : Define 
 \begin{align}
  z(r,t_1,...,t_n,\theta_1,...,\theta_{N+1})=(x_1,x_2,...,x_{2N+2})\notag
 \end{align}
Then let $T_\tau(z)=z(r,t_1,...,t_n,\theta_1+\tau,...,\theta_{N+1}+\tau)$ for $\tau\in[0,2\pi)$. Define $H_{\sharp}^1(A)\subset H^1(A)$ by
 \begin{align}
  H_{\sharp}^1(A)=\{u\in H^1(A) : u(T_\tau(z))=u(z), \forall \tau \in [0,2\pi)\}.
 \end{align}
 \begin{lemma}\label{L2.1}
  $T_\tau : A \rightarrow A$ is a fixed point free group action.
 \end{lemma}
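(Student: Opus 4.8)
The plan is to recognize $T_\tau$ as the restriction to $A$ of the standard diagonal circle action on $\mathbb{C}^{N+1}$, and then to exploit the fact that the origin, which is the unique fixed point of that action, has been removed in forming the annulus.

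First I would pass to complex coordinates: writing $w_j = x_{2j-1}+ i x_{2j}$ for $j=1,\dots,N+1$ identifies $\R^{2N+2}$ with $\mathbb{C}^{N+1}$, and in the coordinates $z(r,t_1,\dots,t_n,\theta_1,\dots,\theta_{N+1})$ one has $w_j = r\,\rho_j(t)\,e^{i\theta_j}$ with $\rho_j(t)\ge 0$ and $\sum_j \rho_j(t)^2 = 1$. Replacing each $\theta_j$ by $\theta_j+\tau$ therefore amounts to $w_j\mapsto e^{i\tau}w_j$, i.e.
\begin{align}
T_\tau(w_1,\dots,w_{N+1}) = (e^{i\tau}w_1,\dots,e^{i\tau}w_{N+1}).\notag
\end{align}
From this formula the group axioms are immediate: $T_0=\mathrm{id}$, $T_\sigma\circ T_\tau = T_{\sigma+\tau}$ with addition taken mod $2\pi$, and $\tau\mapsto T_\tau$ is smooth, so each $T_\tau$ is a diffeomorphism and $\{T_\tau\}_{\tau\in[0,2\pi)}$ is an action of $S^1$. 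To see that $T_\tau$ maps $A$ into itself, note that multiplication by $e^{i\tau}$ on each complex coordinate is a unitary map of $\mathbb{C}^{N+1}$, hence an orthogonal transformation of $\R^{2N+2}$, so it preserves the Euclidean norm: $|T_\tau(z)|=|z|$. Thus $a<|z|<b$ if and only if $a<|T_\tau(z)|<b$, and $T_\tau:A\to A$ is a well-defined bijection with inverse $T_{-\tau}=T_{2\pi-\tau}$.

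Finally, for fixed-point freeness, suppose $T_\tau(z)=z$ for some $z=(w_1,\dots,w_{N+1})\in A$ and some $\tau\in(0,2\pi)$. Then $(e^{i\tau}-1)w_j=0$ for every $j$, and since $e^{i\tau}\neq 1$ this forces $w_j=0$ for all $j$, i.e. $z=0$; but $|z|=0<a$ contradicts $z\in A$. Hence $T_\tau$ has no fixed point in $A$ for any $\tau\in(0,2\pi)$, which is the claim. There is no real difficulty here; the single point worth emphasizing is exactly this one, namely that the fixed-point set of the diagonal circle action on $\mathbb{C}^{N+1}$ is precisely $\{0\}$, which has been excised in passing to the annulus $A$, so the induced action on $A$ is automatically free.
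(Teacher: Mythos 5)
Your proof is correct and rests on the same underlying fact as the paper's: a point fixed by the diagonal rotation of all the angular coordinates must have every radial factor equal to zero, hence must be the origin, which is excluded from $A$. The paper reaches this by a step-by-step elimination in the real angular coordinates (deducing $\cos t_1=0$, then $\cos t_2=0$, and so on, until the first coordinate gives a contradiction), whereas your complex-coordinate formulation $(e^{i\tau}-1)w_j=0$ gets there in one line and in fact proves the stronger statement that the action is free for every $\tau\in(0,2\pi)$, not merely that no point is fixed by the whole group.
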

 
 \begin{remark}
  The above lemma is important. As any solution concentrating on a fixed point shall not give any concentrating orbit.
 \end{remark}
For $u\in H_{\sharp}^1(A)$ we see that $u(T_\tau(z))=u(z), \forall \tau \in [0,2\pi)$, so $u(T_{-\theta_{N+1}}(z))=u(z)$ Let us define new variables
$\psi_i=\theta_i-\theta_{N+1}$ and define $v(r,t_1,...,t_n,\psi_1,...,\psi_{N})=u(T_{-\theta_{N+1}}(z))$

 \begin{lemma}{\label{L2.2}}
  $v$ is well defined.
 \end{lemma}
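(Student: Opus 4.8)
The plan is to show that the definition $v(r,t_1,\dots,t_n,\psi_1,\dots,\psi_N) = u(T_{-\theta_{N+1}}(z))$ does not depend on the particular choice of angular coordinates $(\theta_1,\dots,\theta_{N+1})$ representing a given point, but only on the differences $\psi_i = \theta_i - \theta_{N+1}$ (and on $r,t_1,\dots,t_n$). The key observation is that the map $(r,t_1,\dots,t_n,\theta_1,\dots,\theta_{N+1}) \mapsto (r,t_1,\dots,t_n,\psi_1,\dots,\psi_N)$ has a nontrivial fiber: two angular tuples $(\theta_1,\dots,\theta_{N+1})$ and $(\theta_1',\dots,\theta_{N+1}')$ give the same $(\psi_1,\dots,\psi_N)$ precisely when $\theta_i' = \theta_i + \tau$ for all $i$, where $\tau = \theta_{N+1}' - \theta_{N+1}$ (computed mod $2\pi$). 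In other words, the fiber through $z$ is exactly the orbit $\{T_\tau(z): \tau\in[0,2\pi)\}$.

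First I would make this precise: suppose $z = z(r,t_1,\dots,t_n,\theta_1,\dots,\theta_{N+1})$ and $z' = z(r,t_1,\dots,t_n,\theta_1',\dots,\theta_{N+1}')$ are two coordinate representations with $\theta_i - \theta_{N+1} \equiv \theta_i' - \theta_{N+1}' \pmod{2\pi}$ for each $i=1,\dots,N$. Setting $\tau := \theta_{N+1}' - \theta_{N+1}$, one checks directly from the coordinate formulas that $z' = T_\tau(z)$: incrementing every $\theta$-angle by the same $\tau$ is exactly the action $T_\tau$, and the constraint on the differences forces $\theta_i' \equiv \theta_i + \tau$ for all $i \le N$ as well as the trivial identity $\theta_{N+1}' = \theta_{N+1}+\tau$. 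Then I would compute
\[
u(T_{-\theta_{N+1}'}(z')) = u\bigl(T_{-\theta_{N+1}-\tau}(T_\tau(z))\bigr) = u(T_{-\theta_{N+1}}(z)),
\]
using that $\tau \mapsto T_\tau$ is a group action (so $T_{-\theta_{N+1}-\tau}\circ T_\tau = T_{-\theta_{N+1}}$). Hence the two candidate values of $v$ agree, so $v$ is well defined as a function on the quotient coordinates. A secondary point to address is that the $\psi_i$ are themselves only defined mod $2\pi$, but since $u$ is $2\pi$-periodic in each original angle, $u(T_{-\theta_{N+1}}(z))$ depends on the $\theta_i$ only through their values mod $2\pi$, hence only on the $\psi_i$ mod $2\pi$; this is the consistency needed for $v$ to descend to the torus coordinates $(\psi_1,\dots,\psi_N)$.

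The main obstacle I anticipate is purely bookkeeping: being careful about coordinate singularities (the representation $z(r,t_1,\dots,\theta_{N+1})$ degenerates where some $t_j = 0$ or $\pi/2$, i.e. on lower-dimensional strata where not all $\theta$'s are genuine coordinates), and about the mod-$2\pi$ arithmetic. On the singular strata one should note that the orbit $\{T_\tau(z)\}$ may be of lower dimension, but the argument above still applies verbatim because it only uses the group property of $T_\tau$ and the invariance $u\circ T_\tau = u$; alternatively, one invokes Lemma \ref{L2.1} to guarantee the orbits are genuine circles and then extends by continuity. I would also remark that once $v$ is well defined, it is automatic that $v \in H^1$ of the reduced domain, since $v$ is obtained from $u \in H_\sharp^1(A)$ by a smooth change of variables composed with the quotient by the free $S^1$-action, but the statement of the lemma only asks for well-definedness, so I would defer the regularity/function-space bookkeeping to the subsequent reduction.
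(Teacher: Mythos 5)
Your proposal is correct and follows essentially the same route as the paper: both arguments observe that two angular tuples with the same differences $\psi_i=\theta_i-\theta_{N+1}$ must differ by a common shift $\tau$ in every angle, hence represent points on the same $T_\tau$-orbit, and then invoke the invariance $u\circ T_\tau=u$ together with the group law to conclude the two candidate values of $v$ coincide. Your additional remarks on the mod-$2\pi$ arithmetic and the coordinate degeneracies are careful touches the paper omits but do not change the substance of the argument.
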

 The proofs of lemma \ref{L2.1} and lemma \ref{L2.2} are given in the appendix. 
 
 Note that any solution of (\ref{E0.2}) is a critical point of the 
 functional 
 \begin{align}
  J_\eps(u)&=\int_{A}\Big(\frac{\eps^2}{2}|\grad u|^2 + |x|^\alpha\frac{u^2}{2}-|x|^\alpha F(u)\Big)dx\notag\\
  &=\int_{I\times S^{2N+1}}\Big[\frac{\eps^2}{2}(u_r^2+\frac{1}{r^2}|\grad_{S^{2N+1}}u|^2)+r^\alpha\frac{u^2}{2}-r^\alpha F(u)\Big]r^{2N+1}drd\sigma_{S^{2N+1}}\notag
 \end{align} 
 
 Let $\Psi : A\to\mathcal{M}$ be the Riemann submersion, under the metric (\ref{Ap A.3}), given by \be \Psi (r,\theta)=(s,\psi)\label{a2.10}\ee 
 where $\theta$ is a point on $S^{2N+1}$ and $\psi$ is the image of $\theta$ of the map (\ref{Ap A.6}),(see appendix A).  Denote the projections 
 by $s, \sigma$ from $\M$ onto $I'$ and $\cpn$ respectively. Then under this change of co-ordinates the energy functional $J_\eps(u)$ takes the 
 form (see appendix B)
 \be 2\pi\int_{I'\times\cpn}\Big[\frac{\eps^2}{2}(v_s^2+(\frac{2N}{2N-1}s)^{-2}|\grad_{\cpn}v|^2)\notag\ee
  \be +\Big(\frac{2N-1}{2N}\Big)^{\frac{4N+2+\alpha}{2N}}\Big(\frac{v^2}{2s^{\frac{\alpha+2-2N\alpha}{2N}}}-\frac{F(v)}{s^{\frac{\alpha+2
  -2N\alpha}{2N}}}\Big)\Big] s^{2N}dsdV_{\cpn}\label{2.3}\ee
 
 So the critical points of $J_\eps$ in $H_\sharp^1(A)$ corresponding to solutions of the following equation
\begin{equation}
 \begin{array}{lll}
  -\eps^2{\De_g v} + (\frac{2N-1}{2N})^{\frac{4N+2+\alpha}{2N}}\Big(\frac{v}{|s(p)|^\eta}-\frac{f(v)}{|s(p)|^\eta}\Big)=0  &\mbox{ \qquad in } \M \notag\\
  v>0   &\mbox{ \qquad in }  \M \\
  \frac{\partial v}{\partial\nu} = 0  &\mbox{   \qquad on } \partial \M\\
  \end{array}
\end{equation}
where $\eta=\frac{\alpha+2-2N\alpha}{2N}$ and $s$ is the projections from $\M$ onto $I'$.

Or equivalently replacing $\eps$ by $(\frac{2N-1}{2N})^{-\frac{4N+2+\alpha}{4N}}\eps$ we have
\begin{equation}
 \begin{array}{lll}
  -\eps^2{\De_g v} + \frac{v}{|s(p)|^\eta}-\frac{f(v)}{|s(p)|^\eta}=0  &\mbox{ \qquad in } \M \label{E2.1}\\
  v>0   &\mbox{ \qquad in }  \M \\
  \frac{\partial v}{\partial\nu} = 0  &\mbox{   \qquad on } \partial \M\\
  \end{array}
\end{equation} 
where $f(t)=0 \mbox{ for } t<0 \mbox{, } t^p \mbox{ for } t\ge0$.

 We look for a single peak solution for the equation (\ref{E2.1}). In order to do this We will find a Mountain Pass solution to equation (\ref{E2.1}) 
 and analyze its behavior to prove our theorem. We shall first analyze
 the limit equation of (\ref{E2.1}). It turns out that (as we shall see in the next section), for a mountain pass solution $u_\eps$ of (\ref{E2.1}) 
 the transformed function $U_\eps(x)=u_\eps(\eps x_{P_0}^{-1}(x))$ converges uniformly to a solution of the equation (in $C^2_{loc}$ sense).
 \begin{equation}
 \begin{array}{lll}
  {\De U} - \frac{U}{|s(P_0)|^\eta}-\frac{f(U)}{|s(P_0)|^\eta}=0, U>0  &\mbox{ \qquad in } \{x\in\R^{2N+1} : \mu\cdotp x>0\} \\
  \frac{\partial U}{\partial\mu} = 0  &\mbox{   \qquad on } \{x\in\R^{2N+1} : \mu\cdotp x=0\}\label{E2.2}\\
  \lim_{|x|\goto\infty}U(x)=0
  \end{array}
\end{equation} 
 for some unit vector $\mu\in\R^{2N+1}, \ P_0\in\partial\M$ and for some local co-ordinate $x_{P_0}$ around $P_0$. Let $|s(P_0)|^\eta=\kappa$.

  Let $V(x)=U(\frac{x}{\sqrt\kappa})$, the for $U$ solving (\ref{E2.2}), $V$ satisfies
 \begin{equation}
 \begin{array}{lll}
  {\De V} - V + f(V)=0, V^2>0  &\mbox{ \qquad in } \{x\in\R^{2N+1} : \mu\cdotp x>0\} \\
  \frac{\partial V}{\partial\mu} = 0  &\mbox{   \qquad on } \{x\in\R^{2N+1} : \mu\cdotp x=0\}\label{E2.3}\\
  \lim_{|x|\goto\infty}V(x)=0
  \end{array}
\end{equation}

 With choosing proper co-ordinate chart around $P_0$ we can have $\mu=(0,...0,1)$. For $U\in H^1(\R^{2N+1}_+)$ define
 \begin{align}
  \Gamma(U)=\int_{\R^{2N+1}_+}\Big[\frac{1}{2}|\grad U|^2+\frac{U^2}{2\kappa}-\frac{F(U)}{\kappa}\Big]dx\label{a2.4}
 \end{align}

 Let $\mathcal{B}$ be the set of all solutions $U$ of equation (\ref{E2.2}) with $\mu=(0,0,...,0,1)$ satisfying
 \be U(0)=\max_{x\in \R^{2N+1}_+} U(x)\notag \ee
 Then the following results are well known about $U$
 \begin{prop}
  Any $V\in \mathcal{B}$ is radially symmetric and $V'(r)<0$ for $r>0$. Moreover there exist a $C,c>0$ such that $V(x)+|\grad V(x)|\le C\exp{-c|x|}$.
  Also the set $\mathcal{B}$ is compact in $H^1(\R^{2N+1}_+)$.  
 \end{prop}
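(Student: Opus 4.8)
The plan is to reduce the half--space Neumann problem to an entire problem by even reflection, then invoke the classical symmetry theorem of Gidas--Ni--Nirenberg together with Kwong's uniqueness theorem for ground states, while the pointwise decay will come from a barrier/comparison argument.

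First I would reflect. Given $V\in\B$ --- or, equivalently, a solution $U$ of (\ref{E2.2}) rescaled by $V(x)=U(x/\sqrt\kappa)$, so that $V$ solves (\ref{E2.3}) with $f(t)=t^p$ for $t\ge0$ and $f(t)=0$ for $t<0$ --- I extend $V$ evenly across the hyperplane $\{x\cdot\mu=0\}$. Because $V$ satisfies the homogeneous Neumann condition there, the even extension $\tilde V$ is $C^1$ across the hyperplane, and expressing $\partial^2_{\mu\mu}\tilde V$ from the equation shows $\tilde V\in C^2(\R^{2N+1})$ is a classical solution of $\De\tilde V-\tilde V+f(\tilde V)=0$ on all of $\R^{2N+1}$ (using $f\in C^1$ and interior elliptic regularity). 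Then $\tilde V>0$, $\tilde V\in H^1(\R^{2N+1})$, $\tilde V\to0$ at infinity, $\tilde V$ is even in $x\cdot\mu$, and $\tilde V(0)=\max\tilde V$ by the defining property of $\B$.

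Next I would apply the moving--plane method of Gidas--Ni--Nirenberg to $\tilde V$: a positive $C^2$ solution of $\De w-w+w^p=0$ on $\R^{2N+1}$ with $w\to0$ at infinity is radially symmetric about some point $x_0$ and strictly radially decreasing, $w'(r)<0$ for $r>0$. Evenness in $x\cdot\mu$ forces $x_0$ onto the hyperplane, and since a strictly decreasing radial function attains its maximum only at its center, the normalization $\tilde V(0)=\max\tilde V$ forces $x_0=0$; restricting to $\R^{2N+1}_+$ gives that $V$ is radially symmetric about the origin with $V'(r)<0$. For the decay, I would pick $R_0$ with $V(r)^{p-1}\le\frac12$ for $r\ge R_0$, so that $\De V-\frac12 V\ge0$ outside $B_{R_0}$, and compare with the barrier $\bar w(x)=Ce^{-c|x|}$: for fixed $c\in(0,1/\sqrt2)$ and $|x|$ large one has $\De\bigl(e^{-c|x|}\bigr)-\frac12 e^{-c|x|}=\bigl(c^2-\frac{2Nc}{|x|}-\frac12\bigr)e^{-c|x|}<0$, so, choosing $C$ large on a sphere $\{|x|=R_1\}$ with $R_1\ge R_0$ and using the maximum principle for $\De-\frac12$ (both $V$ and $\bar w$ tending to $0$ at infinity), one gets $V(x)\le Ce^{-c|x|}$; interior gradient estimates on unit balls $B_1(x)$, whose right--hand side is $O\bigl(e^{-c|x|}\bigr)$, then give $|\grad V(x)|\le Ce^{-c|x|}$, hence the stated bound.

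Finally, for compactness: by the first two steps every element of $\B$ is the restriction to $\R^{2N+1}_+$ of one and the same function, namely the unique positive radial solution of $\De w-w+w^p=0$ on $\R^{2N+1}$ decaying at infinity --- uniqueness here being Kwong's theorem --- so $\B$ is a single point and in particular compact in $H^1(\R^{2N+1}_+)$; alternatively, without invoking uniqueness, the uniform decay from the previous step combined with standard elliptic estimates shows every sequence in $\B$ has an $H^1$--convergent subsequence whose limit is again a solution with maximum at the origin. The step I expect to be most delicate is the moving--plane argument --- specifically, pinning the center of symmetry to be exactly the origin (which is precisely why the normalization $V(0)=\max V$ is built into $\B$) and checking that the even reflection really yields a classical entire solution; once the classical symmetry and uniqueness theorems are available, the remaining estimates are routine.
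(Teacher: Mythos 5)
The paper does not actually prove this proposition: it is stated as ``well known'' with no argument, the intended references being Gidas--Ni--Nirenberg for symmetry, Kwong for uniqueness, and the Ni--Takagi/Byeon--Park literature for the decay and compactness. Your reconstruction is the standard underlying argument and is essentially correct: even reflection across the Neumann hyperplane to get an entire positive solution of $\De w - w + w^p = 0$ (note you are implicitly, and correctly, fixing the sign typo in (\ref{E2.2}), where the paper writes $-f(U)/\kappa$ but must mean $+f(U)/\kappa$, since otherwise $U$ would be a positive subharmonic function vanishing at infinity, hence zero); moving planes to get radial symmetry about a center which evenness plus the normalization $U(0)=\max U$ pins to the origin; an exterior comparison with $Ce^{-c|x|}$, $c^2<1/2$, for the decay, with interior gradient estimates on unit balls of the reflected solution handling $|\grad V|$; and Kwong's uniqueness (valid here since $1<p<\frac{2N+3}{2N-1}=\frac{n+2}{n-2}$ with $n=2N+1$) to conclude $\B$ is a single point, which is of course compact. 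Two small remarks. First, if you invoke the original Gidas--Ni--Nirenberg theorem on $\R^n$ you need a priori decay, not just $w\to 0$; either cite the later refinements (Li, Li--Ni, using $f'(0)=-1<0$) or simply run your barrier argument \emph{first} --- it nowhere uses radial symmetry, only that $V\to 0$ at infinity so that $\De V\ge\frac12 V$ outside a compact set --- and then feed the exponential decay into the classical moving-plane theorem. Second, your observation that $\B$ is a singleton is strictly stronger than the stated compactness; the reason the paper (following Byeon--Park) phrases the conclusion as compactness of $\B$ rather than uniqueness is that the rest of their scheme is designed to work for nonlinearities where Kwong-type uniqueness is unavailable, so your fallback argument (uniform exponential decay plus elliptic estimates giving $H^1$-subconvergence of any sequence in $\B$) is the one that matches the spirit of the framework, while the Kwong route is the cleanest for this particular $f$.
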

 \begin{prop}
  For $U\in\mathcal{B}$ we have for $1\le j \le 2N, \ m\ge 0$ the following identities
  \begin{align}
   &(i) \ \int_{\R^{2N+1}_+}[\frac{1}{2}|\grad U|^2+\frac{U^2}{2\kappa}-\frac{F(U)}{\kappa}]x_{2N+1}^mdx=\frac{m+1}{2N+m+1}\int_{\R^{2N+1}_+}|\grad U|^2x_{2N+1}^mdx\label{a2.5}\\
   &(ii) \ \int_{\R^{2N+1}_+}\Big(\frac{\partial U}{\partial x_j}\Big)^2 x_{2N+1}^mdx=\frac{1}{2N+m+1}\int_{\R^{2N+1}_+}|\grad U|^2x_{2N+1}^mdx\label{a2.6}\\
   &(iii) \ \int_{\R^{2N+1}_+}\Big(\frac{\partial U}{\partial x_{2N+1}}\Big)^2 x_{2N+1}^mdx=\frac{m+1}{2N+m+1}\int_{\R^{2N+1}_+}|\grad U|^2x_{2N+1}^mdx\label{a2.7}
  \end{align}
 and the Pohozaev identity
  \begin{align}
   &(iv) \ \int_{\R^{2N+1}_+}\Big[\frac{2N-1}{2}|\grad U|^2-(2N+1)\frac{U^2}{2\kappa}-(2N+1)\frac{F(U)}{\kappa}\Big]dx=0\label{a2.8}
  \end{align}

 \end{prop}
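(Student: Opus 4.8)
The plan is to extract all four identities from the Euler--Lagrange equation $\De U-\tfrac{U}{\kappa}+\tfrac{f(U)}{\kappa}=0$ on $\R^{2N+1}_+=\{x\in\R^{2N+1}:x_{2N+1}>0\}$, with the Neumann condition $\partial U/\partial x_{2N+1}=0$ on $\{x_{2N+1}=0\}$, using repeatedly the radial symmetry of $U$ about the origin and the bounds $U(x)+|\grad U(x)|\le Ce^{-c|x|}$ supplied by the previous Proposition; together with elliptic regularity these make every integral below finite and kill every boundary contribution at infinity. Throughout I write $z=x_{2N+1}$, $y=(x_1,\dots,x_{2N})$, and set $E_m=\int_{\R^{2N+1}_+}|\grad U|^2 z^m\,dx$, $G_m=\int_{\R^{2N+1}_+}|\grad_y U|^2 z^m\,dx$, $Z_m=\int_{\R^{2N+1}_+}(\partial_z U)^2 z^m\,dx$, so that $E_m=G_m+Z_m$; note also that $F(t)=t^{p+1}/(p+1)$, hence $tf(t)=(p+1)F(t)$.

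Identities (ii) and (iii) are purely kinematic: they use only that $U=U(r)$, $r=|x|$, with exponential decay. In polar coordinates $x=r\omega$ with $\omega$ in the upper half-sphere $S^{2N}_+=\{\omega\in S^{2N}:\omega_{2N+1}>0\}$, one has $\partial_{x_j}U=U'(r)x_j/r$, and both $G_m$ and $Z_m$ split as the same radial integral $\int_0^{\infty}U'(r)^2 r^{2N+m}\,dr$ times an angular moment of $\omega_{2N+1}$ over $S^{2N}_+$, so that
\[
\frac{G_m}{Z_m}=\frac{\int_{S^{2N}_+}(1-\omega_{2N+1}^2)\,\omega_{2N+1}^{m}\,d\omega}{\int_{S^{2N}_+}\omega_{2N+1}^{m+2}\,d\omega}.
\]
An elementary Beta-function recursion gives $\int_{S^{2N}_+}\omega_{2N+1}^{m+2}\,d\omega=\tfrac{m+1}{2N+m+1}\int_{S^{2N}_+}\omega_{2N+1}^{m}\,d\omega$, hence $G_m/Z_m=2N/(m+1)$; combined with $E_m=G_m+Z_m$ this forces $Z_m=\tfrac{m+1}{2N+m+1}E_m$, which is (iii). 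Since $\int(\partial_{x_j}U)^2 z^m\,dx$ does not depend on $j\in\{1,\dots,2N\}$ by rotational invariance in $y$, it equals $\tfrac1{2N}G_m=\tfrac1{2N+m+1}E_m$, which is (ii).

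For (i) I would test the equation against $z^{m+1}\partial_z U$ and integrate over $\R^{2N+1}_+$. In the Laplacian term, $\int\De U\, z^{m+1}\partial_z U\,dx=-\int\grad U\cdot\grad(z^{m+1}\partial_z U)\,dx$ (the boundary term over $\{z=0\}$ vanishes since $z^{m+1}=0$ there), and after expanding, using $\grad U\cdot\grad\partial_z U=\tfrac12\partial_z|\grad U|^2$, and integrating once more by parts in $z$, this reduces to $\tfrac{m+1}{2}E_m-(m+1)Z_m$. The two lower-order terms give $-\tfrac1\kappa\int U z^{m+1}\partial_z U\,dx=\tfrac{m+1}{2\kappa}\int U^2 z^m\,dx$ and $\tfrac1\kappa\int f(U)z^{m+1}\partial_z U\,dx=-\tfrac{m+1}{\kappa}\int F(U)z^m\,dx$. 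Cancelling $m+1$ leaves
\[
\tfrac12(G_m-Z_m)+\tfrac1{2\kappa}\int U^2 z^m\,dx-\tfrac1\kappa\int F(U)z^m\,dx=0,
\]
which, using $E_m=G_m+Z_m$, is equivalent to $\int\bigl[\tfrac12|\grad U|^2+\tfrac{U^2}{2\kappa}-\tfrac{F(U)}{\kappa}\bigr]z^m\,dx=Z_m$; together with (iii) this is exactly (i).

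Finally, (iv) is a Pohozaev identity: test the equation against $x\cdot\grad U$ and integrate over $\R^{2N+1}_+$. On $\{z=0\}$ both of the usual boundary integrals vanish --- the first because $x\cdot\nu=-z=0$ there, the second because $\partial_\nu U=-\partial_z U=0$ by the Neumann condition --- while the exponential decay disposes of the contributions at infinity; writing $g(U)(x\cdot\grad U)=x\cdot\grad(G(U))$ with $G(t)=\tfrac{F(t)}{\kappa}-\tfrac{t^2}{2\kappa}$ and integrating the left side by parts twice leaves $\tfrac{2N-1}{2}\int|\grad U|^2\,dx=(2N+1)\int\bigl(\tfrac{F(U)}{\kappa}-\tfrac{U^2}{2\kappa}\bigr)\,dx$, which is (iv) after rearranging. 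In the whole argument the only genuinely delicate points are (a) justifying each integration by parts --- i.e.\ checking that $U$, $\grad U$ and the second derivatives occurring above decay fast enough for all integrals to converge and all boundary terms at $|x|\to\infty$ to vanish, which follows from the exponential estimates of the previous Proposition and elliptic regularity --- and (b) the slightly fussy but elementary Beta-function moment identity underlying (ii)--(iii).
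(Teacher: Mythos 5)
Your argument is correct, and it is the standard derivation of these identities (testing the equation against $x_{2N+1}^{m+1}\partial_{x_{2N+1}}U$ and $x\cdot\grad U$, plus the angular moment computation for a radial profile); the paper itself offers no proof, simply asserting the proposition as well known, so there is nothing to compare against beyond noting that your route is the expected one. One point deserves attention: the Pohozaev identity you actually derive is
$\frac{2N-1}{2}\int|\grad U|^2\,dx+\frac{2N+1}{2\kappa}\int U^2\,dx-\frac{2N+1}{\kappa}\int F(U)\,dx=0$,
whereas the printed identity (iv) carries a minus sign on the $U^2$ term; your version is the correct one (it is exactly what (i) gives at $m=0$, whereas the printed (iv) is inconsistent with (i)), so the statement as printed contains a sign typo rather than your proof containing an error --- but you should not say your identity ``is (iv) after rearranging'' without remarking on this. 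The only other quibble is cosmetic: the equation satisfied by $U\in\mathcal{B}$ is likewise misprinted in the paper (with $-f(U)/\kappa$ instead of $+f(U)/\kappa$), and you have correctly worked with the Euler--Lagrange equation of the functional $\Gamma$, which is the intended one.
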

 
 For $u\in C^\infty{\M}$ define
 \begin{align}
  ||u||_\eps=\int_\M\Big[\eps^2|\grad_g u|^2 + \frac{u^2}{|s(p)|^\eta}\Big]dv_g
 \end{align}
 We can easily verify that $||.||_\eps$ defines an equivalent norm on $C^\infty(\M)$ as the usual $H^1$ norm on $\M$. let $H_\eps(\M)$ be the 
 completion of $C^\infty(\M)$ in the norm $||.||_\eps$. The for $u \in H_\eps(\M)$ we have 
 \begin{align}
  \Gamma_\eps(u)=\frac{1}{2}||u||_\eps^2-\int_\M \frac{F(u)}{|s(p)|^\eta}dv_g\label{a2.9}
 \end{align}
 
 \section{Some geometric preliminaries}
For $P_0\in\partial\M$, let $(x_1, x_2,...,x_{2N})$ be a Riemann normal coordinates on $\partial\M$ at $P_0$. For a point $q$ close enough to $P_0$, 
let $x_{2N+1}$ be the distance of $q$ from $\partial\M$. The chart $x_{P_0}^\pl = (x_1,...,x_{2N},x_{2N+1})$ is known as Fermi co-ordinate at 
$P_0$. In these co-ordinates the arc length $dl^2$ can be written as :
\begin{align}
 dl^2=dx^2_{2N+1}+\sum_{i,j=1}^{2N}g_{ij}(x',x_{2N+1})dx_idx_j\label{a4.1}
\end{align}
where $g_{ij}=g(\frac{\partial}{\partial x_i}, \frac{\partial}{\partial x_j})|_{(x',x_{2N+1})}$.
\begin{lemma}\label{L4.1}
 For $p$ close enough to $P_0$ we have 
 \begin{align}
  x_{2N+1}(p) = |s(p)-s(P_0)|\label{a4.2}
 \end{align}

\end{lemma}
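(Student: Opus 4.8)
The plan is to exploit the warped-product structure of $\M=I'\times_f\cpn$, whose metric is $g_\M=ds^{2}+\tfrac{2N-1}{2N}r^{2}g_{\cpn}$ with the coefficient of $g_{\cpn}$ a positive function of $s$ alone; write it as $g_\M=ds^{2}+h(s)\,g_{\cpn}$, $h>0$. The essential point is that the base direction carries the coefficient $1$ in front of $ds^{2}$: the field $\partial_s$ is everywhere of unit length and is $g_\M$-orthogonal to every slice $\{s=\mathrm{const}\}$, so the curves $u\mapsto(u,\sigma_0)$ with $\sigma_0\in\cpn$ fixed are unit-speed geodesics meeting the boundary slices orthogonally. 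These are precisely the inward normal geodesics used to build the Fermi chart at $P_0$, so the identity to be proved amounts to the statement that the ``radial'' coordinate $s$ of this warped product computes the distance to $\partial\M$.

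First I would note that $\partial\M$ has the two components $\{s=s_{\min}\}$ and $\{s=s_{\max}\}$, where $I'=(s_{\min},s_{\max})$, and that $P_0$ lies on one of them, so $s(P_0)\in\{s_{\min},s_{\max}\}$. Next, for an arbitrary piecewise-$C^{1}$ curve $\gamma(u)=(s(u),\sigma(u))$ in $\overline{\M}$ joining a point $p$ to a point of $\partial\M$ at which $s$ equals $s_\partial$, the warped form of $g_\M$ gives
\[
L(\gamma)=\int\sqrt{\dot s(u)^{2}+h(s(u))\,\bigl|\dot\sigma(u)\bigr|_{g_{\cpn}}^{2}}\;du\ \ge\ \int\bigl|\dot s(u)\bigr|\,du\ \ge\ \bigl|s(p)-s_\partial\bigr|,
\]
so that $\mathrm{dist}(p,\partial\M)\ge\min\bigl(|s(p)-s_{\min}|,\,|s(p)-s_{\max}|\bigr)$. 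For the reverse bound, the vertical segment $u\mapsto(u,\sigma(p))$, with $u$ ranging over the closed interval between $s(p)$ and $s(P_0)$, stays in $\overline{\M}$ (that interval lies in $\overline{I'}$) and has length exactly $|s(p)-s(P_0)|$; hence $\mathrm{dist}(p,\partial\M)\le|s(p)-s(P_0)|$.

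Finally I would close the gap. As soon as $p$ is close enough to $P_0$ that $|s(p)-s(P_0)|<s_{\max}-s_{\min}=|I'|$, which is the distance separating the two boundary components, the minimum of $|s(p)-s_{\min}|$ and $|s(p)-s_{\max}|$ is attained at the endpoint $s(P_0)$; the two inequalities then force $\mathrm{dist}(p,\partial\M)=|s(p)-s(P_0)|$, and since $x_{2N+1}(p)$ is by definition this distance, (\ref{a4.2}) follows. I do not anticipate any genuine difficulty: the only items needing care are the elementary verification that the distance-realizing segment stays in $\overline{\M}$ and the bookkeeping of which boundary component is the nearer one when $p$ is near $P_0$. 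The geometric content, namely that in a warped product $ds^{2}+h(s)g_{\cpn}$ the $s$-factor is totally geodesic and realizes the distance to its level sets, is immediate from the displayed length estimate.
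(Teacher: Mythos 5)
Your proof is correct and follows essentially the same route as the paper's: the upper bound comes from the length of the vertical segment $u\mapsto(u,\sigma(p))$, and the lower bound from discarding the fiber contribution in the length integrand of any competing curve. If anything you are slightly more careful than the paper, which tacitly assumes the distance-realizing boundary point lies on the component containing $P_0$; your explicit observation that $|s(p)-s(P_0)|$ is eventually smaller than the distance to the other component closes that small gap.
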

\begin{proof}
 We have $\M=B\times_f F$, where $B=I'$ and $F=\cpn$ and the metric $g=ds^2+(\frac{2N}{2N-1}r)^2g_{\cpn}$. Now $x_{2N+1}(p)=dist_g(p,\pl\M)$. Let
 $\sigma(p)=p'$ and take the point $(s(P_0),p')\in\pl\M$. Consider the path $\gamma(t)=(s(p)+t(s(P_0)-s(P),p'), \ t\in[0,1]$ joining $p$ and
 $(s(P_0),p')\in\pl\M$. Then
 \begin{align}
  x_{2N+1}(p)\le dist_g(p,(s(P_0),p'))\le\int_0^1(g(\gamma'(t),\gamma'(t)))^{1/2}dt\notag 
 \end{align}
Now $\gamma'(t)=(s(P_0)-s(p))\frac{\pl}{\pl r}$ implies $g(\gamma'(t),\gamma'(t))=|s(P_0)-s(p)|^2$ So we have
 \begin{align}
  x_{2N+1}(p)\le \int_0^1|s(P_0)-s(p)|dt=|s(P_0)-s(p)|\notag 
 \end{align}
 Now from the compactness of $\cpn$ we have a $\tilde{p}\in\pl\M$ such that 
 \begin{align}
  x_{2N+1}(p)=dist_g(p,\pl\M)=dist_g(p,\tilde{p})\notag
 \end{align}
Let $\eta(t)$ be a geodesic joining $p$ and $\tilde{p}$ which is length minimizing. Then
\begin{align}
 x_{2N+1}(p)&=\int_0^1(g(\eta'(t),\eta'(t)))^{1/2}dt\notag\\
 &=\int_0^1((\frac{dr}{dt})^2+\mbox{ some positive terms})^{1/2}dt\notag\\
 &\ge |\int_0^1 dr|=|s(1)-s(0)|=|s(p)-s(P_0)|\notag 
\end{align}

\end{proof}
Now for $P_0\in\pl\M$, let $\mathcal{P}(P_0)$ be the projection of $T_{P_0}(\M)$ onto $T_{P_0}(\pl\M)$. The second fundamental form $\Pi(X,Y)$ is 
defined as $\Pi(X,Y)=\nabla_XY-\mathcal(\nabla_XY)$ for $X,Y\in T(\pl\M)$. The mean curvature of $\pl\M$ at $P_0\in\pl\M$ is defined as the trace 
of $\Pi$ at $P_0\in\pl\M$. Let $(X_1,...,X_{2N})$ be an orthogonal vector field in a neighborhood of $P_0$ in $\pl\M$. It is well known that the 
second fundamental form 
\begin{align}
 \Pi(X_i,X_j)(P_0)=-\frac{\grad f}{f}|_{P_0}=-\frac{1}{s(P_0)}\label{a4.3}
\end{align}

Let $g_{ij}=g(\frac{\partial}{\partial x_i}, \frac{\partial}{\partial x_j})$, corresponding to the Fermi co-ordinate $x_{P_0}^\pl$ at $P_0$.
Let $g^{kl}=((g_{ij}))^{-1}_{kl}$ and $|g|=det((g_{ij}))$. Then it is well known that
 \begin{align}
  g^{ij}(x)=\delta_{ij}+2h_{ij}x_{2N+1}+O(|x|^2)\label{a4.4}\\
  \sqrt(|g|)=1-2NHx_{2N+1}+O(|x|^2)\label{a4.5}
 \end{align}
for $x$ small enough, $(h_{ij})_{1\le i,j\le 2N}$ is the second fundamental form and $H(p)=-\frac{1}{s(P)}$ is the mean curvature at $P\in\pl\M$.

Define the functional $L : \pl\M\times\mathcal{B}\goto\R$ by
\begin{align}
 H(p,U)&=-\sum_{i,j=1}^{2N}h_{ij}(p)\int_{\R^{2N+1}_+}\pl_i U\pl_j U x_{2N+1}dx \notag\\
 &+ 2NH(p)\int_{\R^{2N+1}_+}\Big(\frac{1}{2}|\grad U|^2+\frac{U^2}{2|s(p)|^\eta}-\frac{F(U)}{|s(p)|^\eta}\Big)x_{2N+1}dx\label{a4.6}
\end{align}
 Then we have the following
 \begin{prop}\label{P4.1}
  Let $V\in\mathcal{B}$ be a radially symmetric solution of (\ref{E2.2}) with $\mu=(0,...,0,1)$. Then for any $p\in\pl\M$ we have 
  \begin{align}
   H(p,V)=\frac{N}{N+1}H(p)\int_{\R^{2N+1}_+}|\grad V|^2x_{2N+1}dx\label{a4.7}
  \end{align}

 \end{prop}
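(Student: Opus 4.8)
The plan is to expand the definition (\ref{a4.6}) of $H(p,V)$ and evaluate its two integral pieces separately, using the radial symmetry of $V$ together with the integral identities $(i)$--$(iv)$ stated above, all specialized to $m=1$ and to $\kappa=|s(p)|^\eta$. Implicit in the statement is that $V$ is the element of $\mathcal{B}$ attached to the point $p$ at which we evaluate, i.e. the radial solution of (\ref{E2.2}) for that value of $\kappa$; one must make sure the copy of $\mathcal{B}$ and the identities invoked are the ones associated with this $p$, which is the only genuine bookkeeping subtlety.

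First I would dispose of the ``energy'' summand: identity $(i)$ with $m=1$ gives
\[
\int_{\R^{2N+1}_+}\Big[\tfrac12|\grad V|^2+\tfrac{V^2}{2\kappa}-\tfrac{F(V)}{\kappa}\Big]x_{2N+1}\dx=\frac{1}{N+1}\int_{\R^{2N+1}_+}|\grad V|^2\,x_{2N+1}\dx ,
\]
so this term contributes $\tfrac{2N}{N+1}\,H(p)\int_{\R^{2N+1}_+}|\grad V|^2 x_{2N+1}\dx$ to $H(p,V)$.

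Next, the ``second fundamental form'' summand. Since $V$ is radial, $\pl_iV(x)=V'(|x|)x_i/|x|$, so for $i\neq j$ the integrand $\pl_iV\,\pl_jV\,x_{2N+1}$ is odd in $x_i$, while the half-space $\R^{2N+1}_+=\{x_{2N+1}>0\}$ is invariant under $x_i\mapsto-x_i$ (for $i\le 2N$); hence every off-diagonal integral vanishes. The diagonal ones are all equal, each equal to $\tfrac{1}{2N+2}\int_{\R^{2N+1}_+}|\grad V|^2 x_{2N+1}\dx$ by identity $(ii)$ with $m=1$. Combining this with $\sum_{i=1}^{2N}h_{ii}(p)=2N\,H(p)$ --- which follows by comparing the expansions (\ref{a4.4}) and (\ref{a4.5}), i.e. the mean curvature being $\tfrac{1}{2N}$ the trace of the second fundamental form --- the summand $-\sum_{i,j=1}^{2N}h_{ij}(p)\int_{\R^{2N+1}_+}\pl_iV\,\pl_jV\, x_{2N+1}\dx$ equals $-\tfrac{N}{N+1}H(p)\int_{\R^{2N+1}_+}|\grad V|^2 x_{2N+1}\dx$.

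Adding the two contributions gives $H(p,V)=\big(-\tfrac{N}{N+1}+\tfrac{2N}{N+1}\big)H(p)\int_{\R^{2N+1}_+}|\grad V|^2 x_{2N+1}\dx=\tfrac{N}{N+1}H(p)\int_{\R^{2N+1}_+}|\grad V|^2 x_{2N+1}\dx$, which is (\ref{a4.7}). I do not anticipate a real obstacle here: the whole argument is algebra once the identities $(i)$--$(ii)$ and the trace relation are in hand, and the only steps needing a moment's care are matching the weight $\kappa$ and the parity argument that kills the off-diagonal curvature terms (which uses both the radial symmetry of $V$ and the reflection symmetry of the half-space in the tangential directions).
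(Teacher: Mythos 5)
Your computation is correct and is evidently the intended argument: the paper states Proposition \ref{P4.1} without proof, but the identities $(i)$--$(iii)$ are listed immediately beforehand precisely so that one plugs $m=1$ into $(i)$ for the energy term and into $(ii)$ for the diagonal curvature terms, kills the off-diagonal terms by the parity/radial-symmetry argument you give, and uses $\sum_i h_{ii}(p)=2NH(p)$ to arrive at $\left(-\tfrac{N}{N+1}+\tfrac{2N}{N+1}\right)H(p)\int_{\R^{2N+1}_+}|\grad V|^2x_{2N+1}\,dx$. Your caveat about matching $\kappa=|s(p)|^{\eta}$ to the boundary component on which $p$ lies is the right reading of the paper's slightly loose ``for any $p\in\pl\M$''.
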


 In lemma (\ref{L4.1}) we have shown that for co-ordinate $x_{P_0}^\pl$, $x_{2N+1}(p) = |s(p)-s(P_0)|$. Now the boundary of $\M$ is the two 
 disjoint copies of $\cpn$. let us denote the component of the boundary corresponding to $s=(\frac{2N}{2N-1})^{\frac{1}{2N-1}}a^{\frac{2N}{2N-1}}$ 
 as $\partial\M_a$ and the component of the boundary corresponding to $s=(\frac{2N}{2N-1})^{\frac{1}{2N-1}}b^{\frac{2N}{2N-1}}$ as $\partial\M_b$. 
 Then for $q$ near $\partial\M_a$ from lemma (\ref{L4.1})  we have
  \begin{align}
   |s(q)|^{-\eta}=|s(P_0)|^{-\eta}-\eta|s(P_0)|^{-\eta-1}x_{2N+1}+O(|x_{2N+1}|^2)\label{a4.8}
  \end{align} 
 and for $q$ near $\partial\M_b$ we have
  \begin{align}
   |s(q)|^{-\eta}=|s(P_0)|^{-\eta}+\eta|s(P_0)|^{-\eta-1}x_{2N+1}+O(|x_{2N+1}|^2)\label{a4.9}
  \end{align}

\section{The MP solution and proof of Theorem(\ref{Th0.1})}
Here we shall work with the Fermi coordinate as we have discussed earlier around a point $P_0$ on the boundary of the manifold. We denote it by 
$x_{P_0}^\pl$. Let $\delta$ be small enough such that $x_{P_0}^\pl$ is a diffeomorphism from $\{x\in\M \backslash \pl\M: dist_g(P_0,x)<\delta$ \}to 
an open neighborhood of $0$ in $\R^{2N+1}_+$. Note that $x_{P_0}^\pl$ maps $\pl\M$ into $\R^{2N}=\pl\R^{2N+1}_+$ locally around $P_0$. Define $\phi_\gamma \in
C_0^\infty(\R^{2N+1},[0,1])$ as 

\[\phi_\gamma(x) = 
     \begin{cases}
      1&\text{if $ |x| \le \gamma $}\notag \\
      0&\text{if $ |x| \ge 2\gamma $}\notag \\
     \end{cases}
\]
For $V\in\mathcal{B}$ define $Z_{\eps,t}^\gamma(p)=\phi_\gamma(\frac{x_{P_0}^\pl(p)}{t})V(\frac{x_{P_0}^\pl(p)}{\eps t})$. Let us consider that
$B_+(0,2t\gamma)\subset x_{P_0}^\pl(B_g(P_0,\delta))$. Then 
\begin{align}
 \Gamma_\eps(Z_{\eps,t}^\gamma)=\int_{\M}\Big(\frac{\eps^2}{2}|\grad_g Z_{\eps,t}^\gamma|^2 + \frac{|Z_{\eps,t}^\gamma|^2}{2|s(p)|^\eta}- \frac{F(Z_{\eps,t}^\gamma)}{|s(p)|^\eta}\Big)dv_g\label{a5.1}
\end{align}
 to get the Mountain pass solution we need to first simplify the terms of the above expressions
 \begin{align}
  &\int_{\M}\frac{\eps^2}{2}|\grad_g Z_{\eps,t}^\gamma|^2dv_g\notag\\
  =& \int_{B_g(P_0,\delta)}\frac{\eps^2}{2}|\grad_g Z_{\eps,t}^\gamma|^2dv_g\notag\\
  =& \frac{\eps^2}{2}\int_{B_+(0,2t\gamma)}\sum_{i,j=1}^{2N+1}g^{ij}(x)\pl_i(\phi_\gamma(\frac{x}{t})V(\frac{x}{\eps t}))\pl_j(\phi_\gamma(\frac{x}{t})V(\frac{x}{\eps t})\sqrt(|g|)dx\notag\\
  =& \frac{1}{2}\eps^{2N+1}t^{2N-1}\int_{B_+(0,\frac{2\gamma}{\eps})}\sum_{i,j=1}^{2N+1}g^{ij}(\eps ty)\pl_i(\phi_\gamma(\eps y)V(y))\pl_j(\phi_\gamma(\eps y)V(y))\sqrt(|g|(\eps ty))dy\notag
 \end{align}
Where $x=\eps ty$. From the expressions of $|g|$ and $g^{ij}$ in (\ref{a4.4}) and (\ref{a4.5}) we get
\begin{align}
 &\sum_{i,j=1}^{2N+1}g^{ij}(\eps ty)\pl_i(\phi_\gamma(\eps y)V(y))\pl_j(\phi_\gamma(\eps y)V(y))\sqrt(|g|(\eps ty))\notag\\
 =& |\grad (\phi_\gamma(\eps y)V(y))|^2-2NH(P_0)\eps t|\grad (\phi_\gamma(\eps y)V(y))|^2y_{2N+1}\notag\\
 &+2\eps t \sum_{i,j=1}^{2N}h_{ij}(P_0)\pl_i(\phi_\gamma(\eps y)V(y))\pl_j(\phi_\gamma(\eps y)V(y))y_{2N+1}\notag
  \end{align}
 \begin{align}
 &+\sum_{i,j=1}^{2N}O(|\eps ty|^2)\pl_i(\phi_\gamma(\eps y)V(y))\pl_j(\phi_\gamma(\eps y)V(y))\notag
\end{align}

 So we get 
 \begin{align}
  &\int_{\M}\frac{\eps^2}{2}|\grad_g Z_{\eps,t}^\gamma|^2dv_g\notag\\
  =&\eps^{2N+1}\Big[\int_{B_+(0,\frac{2\gamma}{\eps})}\frac{t^{2N-1}}{2}|\grad (\phi_\gamma(\eps y)V(y))|^2dy\notag\\
  &-2NH(P_0)\frac{\eps t^{2N}}{2}\int_{B_+(0,\frac{2\gamma}{\eps})}|\grad (\phi_\gamma(\eps y)V(y))|^2y_{2N+1}dy\notag\\
  &+\eps t^{2N}\sum_{i,j=1}^{2N}h_{ij}(P_0)\int_{B_+(0,\frac{2\gamma}{\eps})}\pl_i(\phi_\gamma(\eps y)V(y))\pl_j(\phi_\gamma(\eps y)V(y))y_{2N+1}dy\Big]\notag\\
  &+t^{2N+1}O(\eps^{2N+3})\label{a5.2}
 \end{align}
We got the above inequality using the decay estimate of $V$ and $DV$ near infinity. Again using the same decay estimate we can easily show the 
following estimates
\begin{align}
 &\int_{B_+(0,\frac{2\gamma}{\eps}}|\grad (\phi_\gamma(\eps y)V(y))|^2dy\notag\\
 =&\int_{B_+(0,\frac{\gamma}{\eps})}|\grad V(y)|^2dy + \int_{B_+(0,\frac{2\gamma}{\eps})\backslash B_+(0,\frac{\gamma}{\eps})}|\grad (\phi_\gamma(\eps y)V(y))|^2dy\notag\\
 =&\int_{B_+(0,\frac{\gamma}{\eps})}|\grad V(y)|^2dy + O(\eps^2)\notag\\
 =&\int_{\R^{2N+1}_+}|\grad V(y)|^2dy + O(\eps^2)\notag
\end{align}
 Estimating all the terms in the same way we get the expression for the first integral as
 \begin{align}
  &\frac{\eps^2}{2}\int_{\M}|\grad_g Z_{\eps,t}^\gamma|^2dv_g\notag\\
  =&\eps^{2N+1}\Big[\frac{t^{2N-1}}{2}\int_{\R^{2N+1}_+}|\grad V|^2dy
  -2NH(P_0)\frac{\eps t^{2N}}{2}\int_{\R^{2N+1}_+}|\grad V|^2y_{2N+1}dy\notag\\
  &+\eps t^{2N}\sum_{i,j=1}^{2N}h_{ij}(P_0)\int_{\R^{2N+1}_+}\pl_iV\pl_jVy_{2N+1}dy + t^{2N+1}O(\eps^2)\Big]\label{a5.3}
 \end{align}
 The second term is
 \begin{align}
  &\int_{\M}\frac{|Z_{\eps,t}^\gamma|^2}{2|s(p)|^\eta}dv_g\notag\\
  &=\frac{1}{2}\int_{B_g(P_0,\delta)}\frac{|Z_{\eps,t}^\gamma|^2}{|s(p)|^\eta}dv_g\notag\\
  &=\frac{1}{2}\int_{B_+(0,2t\gamma)}\frac{1}{|s(p)|^\eta}\left(\phi_\gamma(\frac{x}{t})V(\frac{x}{\eps t})\right)^2\sqrt{|g(x)|}dx\notag\\
  &=\frac{1}{2}\int_{B_+(0,\frac{2\gamma}{\eps})}\eps^{2N+1}t^{2N+1}\frac{1}{2|s(x_{p_0}^\delta)^{-1}(\eps ty)|^\eta}\left(\phi_\gamma(\eps y)V(y)\right)^2\sqrt{|g(\eps ty)|}dy\notag
 \end{align}
Here we have make the change of variable $x=\eps ty$ as before. To simplify the above expression let us expand $s(p)$ around $s(P_0)$. W.L.O.G we 
can take $P_0$ on the inner boundary. The same approach shall work for $P_0$ on the outer boundary.
\begin{align}
 &|s(p)|^{-\eta}\sqrt{|g|(\eps ty)}\notag\\
 =&(|s(P_0)|^{-\eta}-\eps t\eta|s(P_0)|^{-\eta-1}y_{2n+1}+O(|\eps ty_{2n+1}|^2))(1-2NH(P_0)\eps ty_{2n+1}+O(|\eps ty_{2n+1}|^2)\notag\\
 =& |s(P_0)|^{-\eta}-\eps t\eta|s(P_0)|^{-\eta-1}y_{2n+1}-2NH(P_0)\eps ty_{2n+1}|s(P_0)|^{-\eta} +O(|\eps ty|^2
\end{align}
 Then we have 
 \begin{align}
  &\int_{B_+(0,\frac{2\gamma}{\eps})}\frac{\eps^{2N+1}t^{2N+1}}{2|s(p)|^{\eta}}(\phi_\gamma(\eps y)V(y))^2\sqrt{|g(\eps ty)|}dy\notag\\
  =&\eps^{2N+1}\Big[\int_{B_+(0,\frac{2\gamma}{\eps})}\Big(\frac{t^{2N+1}}{2}\frac{(\phi_\gamma(\eps y)V(y))^2}{|s(P_0)|^\eta}-
  2NH(P_0)\eps t\frac{(\phi_\gamma(\eps y)V(y))^2}{|s(P_0)|^\eta}y_{2N+1}\notag\\
  &-\eps t\frac{(\phi_\gamma(\eps y)V(y))^2}{|s(P_0)|^{\eta+1}}y_{2N+1}\Big)dy+O(\eps^2)\Big]\notag\\
  =&\eps^{2N+1}\Big[t^{2N+1}\int_{\R^{2N+1}_+}\frac{V^2(y)}{2|s(P_0)|^\eta}dy
  -2NH(P_0)\eps t^{2N+2}\int_{\R^{2N+1}_+}\frac{V^2(y)}{2|s(P_0)|^\eta}y_{2N+1}dy\notag\\
  &-\eps t^{2N+2}\int_{\R^{2N+1}_+}\frac{V^2(y)}{2|s(P_0)|^{\eta+1}}y_{2N+1}dy+O(\eps^2)\Big]\label{a5.5}
 \end{align}
 Similarly we have
 \begin{align}
  &\int_{\M}\frac{F(Z_{\eps,t}^\gamma)}{|s(p)|^\eta}dv_g\notag\\
  =&\eps^{2N+1}\Big[t^{2N+1}\int_{\R^{2N+1}_+}\frac{F(V)}{|s(P_0)|^\eta}dy
  -2NH(P_0)\eps t^{2N+2}\int_{\R^{2N+1}_+}\frac{F(V)}{|s(P_0)|^\eta}y_{2N+1}dy\notag\\
  &-\eps t^{2N+2}\int_{\R^{2N+1}_+}\frac{F(V)}{|s(P_0)|^{\eta+1}}y_{2N+1}dy+O(\eps^2)\Big]\label{a5.6}
 \end{align}

 Finally combining (\ref{a5.3}), (\ref{a5.5}), (\ref{a5.6}) we get
 \begin{align}
  \eps^{-(2N+1)}\Gamma_\eps(Z_{\eps,t}^\gamma)= I_1(t)-\eps I_2(t)-\eps I_3(t)+O(\eps^2)\label{a5.7}
 \end{align}
where
 \begin{align}
  I_1(t)=&\int_{\R^{2N+1}_+}\Big(\frac{t^{2N-1}}{2}|\grad V|^2dy+\frac{t^{2N+1}}{2}\frac{V^2(y)}{|s(P_0)|^\eta}-t^{2N+1}\frac{F(V)}{|s(P_0)|^\eta}\Big)dy\notag\\
  I_2(t)=&t^{2N+2}\int_{\R^{2N+1}_+}\Big(\frac{V^2(y)}{2|s(P_0)|^{\eta+1}}-\frac{F(V)}{|s(P_0)|^{\eta+1}}\Big)y_{2N+1}dy\notag\\
  I_3(t)=&2NH(P_0)\int_{\R^{2N+1}_+}\Big(t^{2N}\frac{1}{2}|\grad V|^2+t^{2N+2}\frac{V^2(y)}{2|s(P_0)|^{\eta}}-t^{2N+2}\frac{F(V)}{|s(P_0)|^\eta}\Big)y_{2N+1}dy\notag\\
  &+t^{2N}\sum_{i,j=1}^{2N}h_{ij}(P_0)\int_{\R^{2N+1}_+}\pl_iV\pl_jVy_{2N+1}dy\notag
 \end{align}
Now note that $\Gamma_\eps(Z_{\eps,t}^\gamma)=\eps^{2N+1}[I_1(t)+O(\eps)]$. uniformly for $P_0\in\pl\M$ and $t\in\R$ (here we can take any of the 
boundaries and can get the same expression up-to order of $\eps$). From the Pohozaev's identity we see that $\exists t_0$ satisfying
\begin{align}
 t^{2N-1}\int_{\R^{2N+1}_+}\frac{1}{2}|\grad V|^2dy+t^{2N+1}\int_{\R^{2N+1}_+}\Big(\frac{V^2(y)}{\kappa}dy-\frac{F(V)}{\kappa}\Big)dy<-1\notag
\end{align}
for all $t\ge t_0$. Now choose $\gamma$ small enough so that $B_+(0,2t_0\gamma)\subset x_{P_0}^\pl(B_g(P_0,\delta))$. From the compactness of the
boundary and the regularity of $Z_{\eps,t}^\gamma$ and $\Gamma_\eps$ we get the existence of $\eps_0>0, t_0>0$ such that 
$\Gamma_\eps(Z_{\eps,t}^\gamma)<0$ for all $0<\eps<\eps_0$, $t\ge t_0$ and $P_0\in\pl\M$.  Define
\be c_\eps=\inf_{\beta\in\rho} \max_{t\in [0,1]}\Gamma_\eps(\beta(t))\label{a5.8}\ee
where $\rho=\{\beta\in C([0,1], H_\eps(\M) ) : \beta(0)=0 , \beta(1)=Z_{\eps,t_0}^\gamma\}$.

 \begin{lemma}
 $c_\eps$ does not depend upon $p\in\pl\M$ and $V\in\B$
 \end{lemma}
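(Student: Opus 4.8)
The plan is to prove that the value $c_\eps$ is the same for every admissible endpoint $Z_{\eps,t_0}^\gamma$; since the dependence on the base point $P_0\in\pl\M$ (the point denoted $p$ in the statement) and on the profile $V\in\B$ enters the definition of $c_\eps$ only through this endpoint, independence from $(P_0,V)$ follows at once. The whole argument rests on two structural facts: that $0$ is a strict local minimum of $\Gamma_\eps$ with mountain pass level bounded below by a positive constant, and that every admissible endpoint is a nonnegative function lying in the open set $\{\Gamma_\eps<0\}$.

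First I would record the mountain pass geometry. Since $F(t)\le\frac{1}{p+1}t^{p+1}$ with $2<p+1<2^\ast$, the Sobolev embedding attached to the equivalent norm $\norm{\cdot}_\eps$ gives $\Gamma_\eps(u)\ge\tfrac12\norm{u}_\eps^2-C\norm{u}_\eps^{p+1}\ge\tfrac14\norm{u}_\eps^2$ whenever $\norm{u}_\eps\le\varrho$ for $\varrho$ small. Hence $\Gamma_\eps(u)\ge\tfrac14\varrho^2=:\alpha>0$ on the sphere $\norm{u}_\eps=\varrho$. Each admissible endpoint $e=Z_{\eps,t_0}^\gamma$ has $\norm{e}_\eps>\varrho$ and, by the construction carried out in the excerpt, $\Gamma_\eps(e)<0$; therefore every path in $\rho$ must cross this sphere, and the corresponding mountain pass level is $\ge\alpha>0$.

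The core is an endpoint-swap lemma: if $e_0,e_1$ are two admissible endpoints that can be joined by a path $\zeta\subset\{\Gamma_\eps<0\}$, then the associated mountain pass values coincide. Indeed, given any path $\beta$ from $0$ to $e_0$, the concatenation $\beta\ast\zeta$ joins $0$ to $e_1$; since $\max_\tau\Gamma_\eps(\beta(\tau))\ge\alpha>0>\max_\tau\Gamma_\eps(\zeta(\tau))$, we get $\max\Gamma_\eps(\beta\ast\zeta)=\max\Gamma_\eps(\beta)$. Taking the infimum over $\beta$ shows that the level for $e_1$ is $\le$ the level for $e_0$, and reversing $\zeta$ (hence the roles of $e_0,e_1$) gives equality.

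It then remains to build such a connecting path inside $\{\Gamma_\eps<0\}$, and this is the only real work. Because $\phi_\gamma\ge0$ and every $V\in\B$ is positive, each endpoint satisfies $e\ge0$, $e\not\equiv0$; writing $F(t)=\frac{1}{p+1}t^{p+1}$ for $t\ge0$, a nonnegative $w\not\equiv0$ gives $\Gamma_\eps(Sw)=\frac{S^2}{2}\norm{w}_\eps^2-\frac{S^{p+1}}{p+1}\int_\M |s(\cdot)|^{-\eta}w^{p+1}\,dv_g$, a function of the form $AS^2-BS^{p+1}$ with $A,B>0$ and $p+1>2$; such a function is negative for all $S\ge1$ as soon as it is negative at $S=1$, and it tends to $-\infty$ as $S\to\infty$. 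I would therefore join $e_0$ to $e_1$ in three segments: push $e_0$ outward along its ray to $Se_0$ (this stays in $\{\Gamma_\eps<0\}$ because $\Gamma_\eps(e_0)<0$), join $Se_0$ to $Se_1$ by the straight segment $S\big((1-\lambda)e_0+\lambda e_1\big)$, and pull $Se_1$ back along its ray to $e_1$. The middle segment is the delicate point: the convex combination $w_\lambda=(1-\lambda)e_0+\lambda e_1$ is nonnegative and $\not\equiv0$ for every $\lambda\in[0,1]$, so by compactness of $[0,1]$ one has $\inf_\lambda\int_\M |s(\cdot)|^{-\eta}w_\lambda^{p+1}\,dv_g>0$ and $\sup_\lambda\norm{w_\lambda}_\eps^2<\infty$; choosing $S$ large enough then forces $\Gamma_\eps(Sw_\lambda)<0$ uniformly in $\lambda$. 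The nonnegativity of the endpoints is exactly what prevents the positive parts from cancelling along the segment, and it is the key to keeping the whole connecting path below the mountain pass level. With the connecting path in hand, the endpoint-swap lemma finishes the proof.
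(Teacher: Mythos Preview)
Your argument is correct. The paper itself gives no proof, deferring entirely to Lemma~3.1 of Ni--Takagi \cite{MR1115095}. The Ni--Takagi route is different from yours: they identify $c_\eps$ with the Nehari value
\[
c_\eps=\inf_{u\not\equiv 0,\ u^+\not\equiv 0}\ \max_{t\ge 0}\Gamma_\eps(tu),
\]
which is manifestly independent of any particular endpoint; the homogeneity $F(tu)=t^{p+1}F(u)$ for $u\ge 0$ makes the fibering map $t\mapsto\Gamma_\eps(tu)$ have a unique positive maximum and tend to $-\infty$, so every admissible path from $0$ to any negative-energy endpoint can be compared to a ray. Your approach is instead topological: you show that any two admissible endpoints can be joined by a continuous path lying entirely in $\{\Gamma_\eps<0\}$, and then invoke the endpoint-swap lemma. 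Both arguments hinge on the same homogeneity---Ni--Takagi to pin down the unique fibering maximum, you to ensure the radial segments $s\mapsto se_i$ and the scaled linear interpolation $Sw_\lambda$ stay in the negative set. The Ni--Takagi approach has the bonus of yielding the Nehari characterisation of $c_\eps$, which is often used downstream; your approach is a bit more flexible in principle, since it needs only the monotonicity of the sign of $\Gamma_\eps$ along rays past the first zero, not the full fibering picture.
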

 
 \begin{proof}
 Same proof as given in lemma 3.1 in \cite{MR1115095}.
 \end{proof}
 
  

 Let $\beta(t)=Z_{\eps,tt_0}^\gamma$, $t\in [0,1]$. then it follows that $\beta(0)=lim_{t\goto 0}\beta(t)=0$ and $\beta(1)=Z_{\eps,t_0}^\gamma$.
Moreover we see from (\ref{a5.7}) that
\begin{align}
 \Gamma_\eps(\beta(t))=&\eps^{2N+1}\Big[(tt_0)^{2N-1}\int_{\R^{2N+1}_+}\frac{1}{2}|\grad V|^2dy+
 (tt_0)^{2N+1}\int_{\R^{2N+1}_+}\frac{V^2(y)}{2|s(P_0)|^\eta}dy\notag\\
 &-(tt_0)^{2N+1}\int_{\R^{2N+1}_+}\frac{F(V)}{|s(P_0)|^\eta}dy + o(\eps)\Big]\notag
\end{align}
uniformly for $t\in [0,1]$. So 
\begin{align}
 &\Gamma_\eps(\beta(t))\le max_{t\in [0,t_0]}\Big[\frac{t^{2N-1}}{2}\int_{\R^{2N+1}_+}|\grad V|^2dy 
 +t^{2N+1}\int_{\R^{2N+1}_+}\Big(\frac{V^2(y)}{\kappa}-\frac{F(V)}{\kappa}\Big)dy+O(\eps)\Big]\notag\\
 &\mbox{i.e. }\overline{lim}_{\eps\goto 0} c_\eps \le max_{t\in [0,t_0]}\Big[I_1(t)+O(\eps)\Big]\label{a6.1}
\end{align}
 From the Pohozaev identity we get that $t=1$ is the unique maximum point of the RHS. of (\ref{a6.1}). and hence we have  
 \begin{align}
  \overline{lim}_{\eps\goto 0} \eps^{-(2N+1)}c_\eps \le \Gamma(V)\label{a6.2}
 \end{align}

\begin{lemma}\label{L6.1}
 For a mountain pass solution $u_\eps$ any local maxima $P_\eps$ of $u_\eps$ in $\M$ converges to the boundary of $\M$ as $\eps \goto 0$.
\end{lemma}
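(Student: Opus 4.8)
\emph{Proof idea.} This is the Ni--Takagi dichotomy: an interior concentration point would carry a ``full bubble'' of energy, whereas the mountain-pass level only accounts for ``half a bubble'' sitting at the boundary, and the factor $\tfrac12$ produces a contradiction. I argue by contradiction: suppose that along some sequence $\eps\goto0$ a local maximum $P_\eps$ of $u_\eps$ stays at distance $\ge\delta_0>0$ from $\pl\M$, so in particular $\mathrm{dist}_g(P_\eps,\pl\M)/\eps\goto\infty$, and pass to a subsequence with $P_\eps\goto P_\infty\in\M\setminus\pl\M$; write $\kappa_\infty=|s(P_\infty)|^\eta>0$. A preliminary observation is that the maxima cannot collapse to zero: at an interior local maximum $\De_g u_\eps(P_\eps)\le0$, so (\ref{E2.1}) gives $\tfrac{u_\eps(P_\eps)}{|s(P_\eps)|^\eta}\bigl(1-u_\eps(P_\eps)^{p-1}\bigr)=\eps^2\De_g u_\eps(P_\eps)\le0$, whence $u_\eps(P_\eps)\ge1$.

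\textbf{Blow-up.} In geodesic normal coordinates centred at $P_\eps$ set $\tilde u_\eps(y)=u_\eps(\exp_{P_\eps}(\eps y))$. Using a uniform-in-$\eps$ bound $\|u_\eps\|_{L^\infty(\M)}\le C$ --- obtained by a Gidas--Spruck blow-up argument together with the subcriticality $1<p<2^*-1$ and the Liouville theorems on $\R^{2N+1}$ and $\R^{2N+1}_+$ --- together with interior Schauder estimates and the fact that the rescaled metric converges to the Euclidean one, $\tilde u_\eps$ converges (along a subsequence) in $C^2_{\loc}(\R^{2N+1})$ to a solution $U\ge0$ of
\begin{align}
\De U-\kappa_\infty^{-1}U+\kappa_\infty^{-1}f(U)=0\quad\text{on }\R^{2N+1},\qquad U(0)=\max U\ge1.\notag
\end{align}
Since the energies $\eps^{-(2N+1)}c_\eps$ are bounded by (\ref{a6.2}), lower semicontinuity gives that $U$ has finite energy, so $U\in H^1(\R^{2N+1})$, $U>0$, and $U$ decays exponentially.

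\textbf{Energy gap.} Let $m(\kappa)$ be the least (mountain-pass, i.e.\ Nehari) energy of the whole-space problem $\De w-\kappa^{-1}w+\kappa^{-1}f(w)=0$ on $\R^{2N+1}$; the scaling $w\mapsto w(\cdot/\sqrt\kappa)$ yields $m(\kappa)=\kappa^{(2N-1)/2}m(1)$, which is increasing in $\kappa$ because $2N-1>0$. As $u_\eps$ solves (\ref{E2.1}), $c_\eps=\Gamma_\eps(u_\eps)=\tfrac{p-1}{2(p+1)}\|u_\eps\|_\eps^2$; restricting this integral to $B_g(P_\eps,R\eps)$, rescaling as above, letting $\eps\goto0$ and then $R\goto\infty$, Fatou's lemma gives
\begin{align}
\liminf_{\eps\goto0}\eps^{-(2N+1)}c_\eps\ \ge\ \tfrac{p-1}{2(p+1)}\int_{\R^{2N+1}}\bigl(|\grad U|^2+\kappa_\infty^{-1}U^2\bigr)dy\ \ge\ m(\kappa_\infty).\notag
\end{align}
On the other hand the comparison maps $Z_{\eps,t}^\gamma$ underlying (\ref{a6.2}) may be centred at \emph{any} point of $\pl\M$, and the even reflection of a half-space ground state $V\in\B$ across the Neumann hyperplane is the whole-space ground state for the same parameter, so $\Gamma(V)=\tfrac12\,m\bigl(|s(P_0)|^\eta\bigr)$; minimising over $P_0\in\pl\M$ (and using monotonicity of $m$),
\begin{align}
\limsup_{\eps\goto0}\eps^{-(2N+1)}c_\eps\ \le\ \tfrac12\,\Bigl(\min_{\pl\M}|s|^\eta\Bigr)^{(2N-1)/2}m(1).\notag
\end{align}
Since $s\mapsto|s|^\eta$ is monotone on $\overline{I'}$ and $P_\infty$ is interior, $\kappa_\infty\ge\min_{\pl\M}|s|^\eta$, hence $m(\kappa_\infty)\ge\bigl(\min_{\pl\M}|s|^\eta\bigr)^{(2N-1)/2}m(1)>\tfrac12\bigl(\min_{\pl\M}|s|^\eta\bigr)^{(2N-1)/2}m(1)$, contradicting the two displays. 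Therefore every local maximum of $u_\eps$ converges to $\pl\M$ as $\eps\goto0$.

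\textbf{Main obstacle.} The delicate part is the blow-up step: establishing the uniform $L^\infty$ bound on $u_\eps$ and the full $C^2_{\loc}$ convergence of the rescaled solutions, and then making the energy comparison rigorous --- in particular checking that the limit $U$ is nontrivial (so that $I(U)\ge m(\kappa_\infty)$ is legitimate) and that no energy escapes under the rescaling. Once these are secured, the $\tfrac12$ in the Ni--Takagi reflection identity drives the conclusion, and the argument is robust because it only uses $\kappa_\infty\ge\min_{\pl\M}|s|^\eta$, not the precise location of $P_\infty$.
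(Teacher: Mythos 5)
Your proof is correct and follows essentially the same route as the paper: blow up at an interior local maximum to an entire positive solution on $\R^{2N+1}$, then derive a contradiction from the factor-of-two energy gap between a full interior bubble and the boundary half-bubble, using the $\kappa^{(2N-1)/2}$ rescaling to compare levels for different values of the weight $|s(\cdot)|^{-\eta}$ (the paper writes this as $\tilde J(w)=2(\tilde\kappa/\kappa)^{(2N-1)/2}\Gamma(V)\ge 2\Gamma(V)$ versus $\overline{\lim}\,\eps^{-(2N+1)}c_\eps\le\Gamma(V)$). One remark: the paper contradicts the weaker hypothesis $\mathrm{dist}_g(P_\eps,\pl\M)/\eps\goto\infty$ and thus obtains the stronger estimate (\ref{a6.3}), which is used again in Lemma \ref{L6.2}; your argument yields this as well if you start from that weaker assumption, since you only ever use $\kappa_\infty\ge\min_{\pl\M}|s|^\eta$ and not the interiority of the limit point.
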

\begin{proof}
 First note that (\ref{E2.1}) has constant solutions 0 and 1. with $\Gamma_\eps(0) = 0$ and $\Gamma_\eps(1) = (\frac{1}{2}-\frac{1}{p+1})
 \int_{\M}\frac{dv_g}{|s(q)|^\eta} > 0$. But $c_\eps=O(\eps^{2N+1})$. So $u_\eps$ is not constant. Clearly $u_\eps(P_\eps)\ge 1$. We claim 
 that there exists a constant $C$ such that 
 \begin{align}
  lim_{\eps \goto 0}\frac{1}{\eps} dist_g(P_\eps,\pl\M) < C\label{a6.3}
 \end{align}
If not, let $\exists \ \eps_m \goto 0$ such that $\frac{1}{\eps_m} dist_g(P_{\eps_m},\pl\M)\ge 2m$. Consider a normal co-ordinate $x_{P_{\eps_m}}
 : B_g(P_{\eps_m},m\eps_m)\rightarrow \R^{2N+1}$ and define $w_{\eps_m}(x)=u_{\eps_m}(x_{P_{\eps_m}}^{-1}(\eps_m x))$ in $B(0,m)$. Then we have
 \begin{align}
  \frac{1}{\sqrt{|g|(\eps_m x)}}\sum\pl_i\Big(g^{ik}(\eps_m x)\sqrt{|g|(\eps_m x)}\pl_k w_{\eps_m}\Big)+\frac{w_{\eps_m}}{|\kappa_m(x)|^\eta}
  -\frac{f(w_{\eps_m})}{|\kappa_m(x)|^\eta} = 0 \mbox{ in } B(0,m)\notag
 \end{align}
 Where $\kappa_m(x)=s(x_{P_{\eps_m}}^{-1}(\eps_m x))$ in $B(0,m)$. Let $P_{\eps_m}\goto \tilde{P}$ up-to a subsequence (using compactness argument), 
 and take $\tilde{\kappa}=|s(\tilde{P})|^\eta$. Then by standard elliptic estimate and Sobolev embedding we have $\{w_{\eps_m}\}$
 bounded in $C^{2,\theta}(B(0,m))$ for some $0<\theta<1$ and up-to a subsequence $w_{\eps_m}\rightarrow w$ in $C^2_{loc}(\R^{2N+1})$ where
 $w$ satisfies 
 \be  -\De w + \frac{w}{\tilde \kappa}- \frac{f(w)}{\tilde \kappa}=0, \ w>0 \mbox{ in } \R^{2N+1}\label{a6.4}\ee
  let
 \be  \tilde{J}(w)=\int_{\R^{2N+1}_+}\Big(\frac{1}{2}|\grad w|^2 + \frac{w^2}{2\tilde \kappa}-\frac{F(w)}{\tilde \kappa}\Big)dx\label{a6.5}\ee
 
Define $w_1(y)=w(\sqrt{\frac{\tilde \kappa}{\kappa}}y)$. Then $w_1$ satisfies
 \be  -\De w_1 + \frac{w_1}{\kappa}- \frac{f(w_1)}{\kappa}=0, \ w_1>0 \mbox{ in } \R^{2N+1}\label{a6.6} \ee
Now using the change of variable $x=\sqrt{\frac{\tilde \kappa}{\kappa}}y$ we have
 \begin{align}
  \tilde{J}(w)&=\int_{\R^{2N+1}_+}\Big(\frac{1}{2}|\grad w|^2 + \frac{w^2}{2\tilde \kappa}-\frac{F(w)}{\tilde \kappa}\Big)dx\notag\\
  &=\int_{\R^{2N+1}_+}\Big(\frac{1}{2}\frac{\kappa}{\tilde\kappa}|\grad w_1|^2+ \frac{w_1^2}{2\tilde \kappa}-\frac{F(w_1)}{\tilde \kappa}\Big)\Big(\frac{\tilde\kappa}{\kappa}\Big)^{\frac{2N+1}{2}}dx\notag\\
  &=\Big(\frac{\tilde\kappa}{\kappa}\Big)^{\frac{2N-1}{2}}\int_{\R^{2N+1}_+}\Big(\frac{1}{2}|\grad w_1|^2+ \frac{w_1^2}{2\kappa}-\frac{F(w_1)}{\kappa}\Big)dx\notag
 \end{align}

 Case I : Let $\alpha\le\frac{2}{2N-1}$, then we have $\eta>0$ and take $P_0$ on the inner boundary. The $\frac{\tilde{\kappa}}{\kappa} \ge 1$. Take 
 $V=w_1|_{\R^{2N+1}_+}$. Clearly $V$ satisfies (\ref{E2.2}) with $\mu=(0,...,0,1)$ and $\kappa=|s(P_0)|^\eta$. Then by symmetry we get
 \begin{align}
  \tilde{J}(w)=2\Big(\frac{\tilde\kappa}{\kappa}\Big)^{\frac{2N-1}{2}}\Gamma(V)\notag
 \end{align}
 Now $\eps^{-(2N+1)}c_\eps=\eps^{-(2N+1)}\Gamma_\eps(u_\eps)\rightarrow \tilde{J}(w)$ as $\eps\rightarrow 0$. Then we have the following 
 contradictory argument
 \begin{align}
  lim_{\eps_m\goto 0}\eps_m^{-(2N+1)}c_{\eps_m} = \tilde{J}(w) \ge 2\Gamma(V) \ge 2\overline {lim}_{\eps\goto 0}\eps^{-(2N+1)}c_{\eps}\notag
 \end{align}
 
 Case II : Let $\alpha>\frac{2}{2N-1}$, then we have $\eta>0$ and take $P_0$ on the outer boundary. Then similarly as above we shall arise at a 
 contradiction.

 \end{proof}
  
  To prove the next lemma we need the following result from Ni and Takagi
  \begin{lemma}\label{L6.3}
  Let $\phi\in C^2(B_a)$ be radial function satisfying $\phi'(0)=0$ and $\phi''(0)<0$ in $[0,a]$ the $exists$ a $\delta>0$ such that, if 
  $\psi\in C^2(\overline{B(0,a)})$ satisfies (i) $\grad \psi(0)=0$ and (ii) $||\psi-\phi||_{C^2(\overline{B(0,a)})}<\delta$ then 
  $\grad\psi\neq 0$ for $x\neq 0$
 \end{lemma}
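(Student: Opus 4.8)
The plan is to cut the ball $\overline{B(0,a)}$ into a small concentric ball $\overline{B(0,\rho)}$, on which $\grad\psi$ is controlled through the Hessian, and the closed annulus $\{\rho\le|x|\le a\}$, on which $\grad\psi$ is controlled directly through the size of $\phi'$. The parameter $\delta$ will be chosen only \emph{after} $\rho$ and the associated constants are fixed, so that it depends on $\phi$ alone.

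First I would record two elementary facts about the radial profile. Write $\phi(x)=g(\abs{x})$ with $g\in C^2([0,a])$, $g'(0)=0$, $g''(0)<0$ (and $g'(r)\ne0$ for $0<r\le a$, which is the content of the stated sign hypothesis). Since $g'(r)=g''(0)r+o(r)$ one has $\partial_{ij}\phi(0)=g''(0)\delta_{ij}$, i.e. $D^2\phi(0)=g''(0)\,\mathrm{Id}$ is negative definite; by continuity of $D^2\phi$ there are $\rho\in(0,a)$ and $\mu>0$ with
\[
\langle D^2\phi(x)\xi,\xi\rangle\le-\mu\,\abs{\xi}^2\qquad\text{for all }x\in\overline{B(0,\rho)},\ \xi\in\R^{2N+1}.
\]
On the other hand $\abs{\grad\phi(x)}=\abs{g'(\abs{x})}$, and since $g'$ is continuous and nonvanishing on the compact interval $[\rho,a]$, the number $m:=\min_{\rho\le r\le a}\abs{g'(r)}$ is strictly positive, so $\abs{\grad\phi}\ge m$ on $\{\rho\le|x|\le a\}$.

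Now set $\delta:=\tfrac12\min\{\mu,m\}$ and take any $\psi\in C^2(\overline{B(0,a)})$ with $\grad\psi(0)=0$ and $\norm{\psi-\phi}_{C^2(\overline{B(0,a)})}<\delta$. On the annulus, $\abs{\grad\psi}\ge\abs{\grad\phi}-\norm{\psi-\phi}_{C^1}\ge m-\delta>0$, so $\grad\psi$ has no zero there. On $\overline{B(0,\rho)}$ the Hessian satisfies $\langle D^2\psi(x)\xi,\xi\rangle\le\langle D^2\phi(x)\xi,\xi\rangle+\delta\abs{\xi}^2\le-(\mu-\delta)\abs{\xi}^2<0$ at every point, hence $x\mapsto\grad\psi(x)$ is strictly monotone on the convex set $\overline{B(0,\rho)}$: for $x_1\ne x_2$,
\[
\langle\grad\psi(x_1)-\grad\psi(x_2),\,x_1-x_2\rangle=\int_0^1\langle D^2\psi\bigl(x_2+s(x_1-x_2)\bigr)(x_1-x_2),\,x_1-x_2\rangle\,ds<0,
\]
so $\grad\psi$ is injective on $\overline{B(0,\rho)}$; since $\grad\psi(0)=0$, the origin is its only zero there. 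Combining the two regions gives $\grad\psi(x)\ne0$ for every $x\ne0$ in $\overline{B(0,a)}$, which is the claim.

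The argument is entirely soft, so there is no deep obstacle. The only points needing care are matching the hypotheses — one must be sure the stated sign conditions on $\phi$ really force $g'$ to stay bounded away from $0$ on $[\rho,a]$, so that the annulus estimate closes — and the bookkeeping that $\delta$ is selected after $\rho,\mu,m$, so that it depends only on $\phi$. One could instead invoke the inverse function theorem for the local injectivity near the origin, but the monotonicity computation above is cleaner and avoids quantitative inverse-function estimates.
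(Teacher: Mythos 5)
Your proof is correct and complete. The paper itself gives no proof of this lemma --- it is quoted as a ``result from Ni and Takagi'' (Lemma 4.2 of their 1993 Duke paper), and the argument there is the same two-region one you give: negative definiteness of the Hessian, hence strict monotonicity and injectivity of the gradient, on a small ball about the origin, combined with a uniform positive lower bound for $|\nabla\phi|$ on the complementary closed annulus, with $\delta$ chosen last. Your reading of the garbled hypothesis --- that $\phi'$ must be nonvanishing on $(0,a]$, as in Ni--Takagi's original statement --- is the right one and is indeed necessary for the conclusion; the only cosmetic point is the harmless dimensional constant relating the $C^2$-norm bound to the quadratic-form bound on the Hessian perturbation, which is absorbed by shrinking $\delta$.
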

  
 \begin{lemma}\label{L6.2}
  $P_\eps\in\pl\M$ for $\eps$ small enough
 \end{lemma}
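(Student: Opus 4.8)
The plan is to argue by contradiction, in the spirit of Ni--Takagi, combining the refined distance estimate (\ref{a6.3}) with a blow-up analysis and the non-degeneracy Lemma \ref{L6.3}. Suppose the claim fails: there are $\eps_m\goto 0$ and local maxima $P_{\eps_m}$ of $u_{\eps_m}$ with $d_m:=\mathrm{dist}_g(P_{\eps_m},\pl\M)>0$. By Lemma \ref{L6.1} together with (\ref{a6.3}) the ratios $d_m/\eps_m$ stay bounded, so after passing to a subsequence $d_m/\eps_m\goto\ell\ge 0$, $P_{\eps_m}\goto\hat P\in\pl\M$, and we may assume all $P_{\eps_m}$ lie near one fixed component of $\pl\M$. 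I would let $\hat P_m\in\pl\M$ be a nearest boundary point of $P_{\eps_m}$ and $x^{\pl}_{\hat P_m}$ the Fermi chart at $\hat P_m$; since its last coordinate is the distance to $\pl\M$, one has $x^{\pl}_{\hat P_m}(P_{\eps_m})=(0',d_m)$. Then the blow-up $w_m(y):=u_{\eps_m}\big((x^{\pl}_{\hat P_m})^{-1}(\eps_m y)\big)$, defined on $B_+(0,R)$ for each fixed $R$ and $m$ large, has a local maximum at $y_m:=(0',d_m/\eps_m)$ (an interior point since $d_m>0$), with $w_m(y_m)=u_{\eps_m}(P_{\eps_m})\ge 1$ exactly as in the proof of Lemma \ref{L6.1}.

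\textbf{Blow-up limit and its symmetry.} Rescaling (\ref{E2.1}) in the Fermi chart, $w_m$ solves a divergence-form equation with Neumann condition $\pl_{2N+1}w_m=0$ on $\{y_{2N+1}=0\}$, smooth coefficients converging locally uniformly to those of $-\De$, and lower-order coefficient $|\kappa_m|^{-\eta}\goto|s(\hat P)|^{-\eta}=:\tilde\kappa^{-1}$. Since $u_{\eps_m}$ is uniformly bounded, interior and up-to-the-flat-boundary Schauder estimates give, along a subsequence, $w_m\goto V$ in $C^2_{\loc}(\overline{\R^{2N+1}_+})$, where $V>0$ solves the limit problem (\ref{E2.2}) on $\R^{2N+1}_+$ with $\kappa=\tilde\kappa$, has a critical point at $(0',\ell)$, and $V(0',\ell)\ge 1$. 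The bound $\eps^{-(2N+1)}c_\eps=O(1)$ (cf. (\ref{a6.2})) forces $V\in H^1(\R^{2N+1}_+)$, hence $V$ decays exponentially; extending $V$ by even reflection across $\{y_{2N+1}=0\}$ (legitimate because of the Neumann condition) yields a positive finite-energy solution $\hat V$ on $\R^{2N+1}$, which by the Gidas--Ni--Nirenberg symmetry theorem applied to the reflected problem is radially symmetric and strictly radially decreasing about a centre $z_0$, with $z_0\in\{y_{2N+1}=0\}$ by the even symmetry. Since the only critical point of $\hat V$ is $z_0$ while $V$ has one at $(0',\ell)$, I conclude $\ell=0$ and $z_0=0$; in particular $d_m/\eps_m\goto 0$, $\grad V(y)\ne 0$ for $y\ne 0$, and $V''(0)<0$.

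\textbf{Reflection and the non-degeneracy lemma.} Reflect $w_m$ evenly across $\{y_{2N+1}=0\}$ to get $\hat w_m$; using the Neumann condition and its tangential derivatives one checks $\hat w_m\in C^2(\overline{B(0,a)})$ for a fixed small $a$, and $\hat w_m\goto\hat V$ in $C^2(\overline{B(0,a)})$, where $\hat V$ is now viewed as a radial function of $|y|$. Shrinking $a$ so that $\hat V$ satisfies the hypotheses of Lemma \ref{L6.3}, let $\delta>0$ be the constant it provides. Put $\psi_m(x):=\hat w_m(x+y_m)$; then $\grad\psi_m(0)=\grad\hat w_m(y_m)=0$ (as $y_m$ is an interior critical point of $w_m$) and, since $y_m\goto 0$, $\psi_m\goto\hat V$ in $C^2(\overline{B(0,a)})$. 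Hence for $m$ large Lemma \ref{L6.3} gives $\grad\hat w_m\ne 0$ on $B(y_m,a)\setminus\{y_m\}$. But the even reflection forces $\hat w_m$ to have a second critical point at the mirror image $\bar y_m:=(0',-d_m/\eps_m)$; since $d_m>0$ we have $\bar y_m\ne y_m$, while $|\bar y_m-y_m|=2d_m/\eps_m\goto 0$ puts $\bar y_m\in B(y_m,a)\setminus\{y_m\}$ for $m$ large. This contradicts $\grad\hat w_m(\bar y_m)=0$, and the contradiction shows $P_\eps\in\pl\M$ for $\eps$ small.

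\textbf{Main obstacle.} The delicate point will be the symmetry step: to apply Lemma \ref{L6.3} one must know the blow-up limit is radial about \emph{exactly} the origin. This hinges on three ingredients working in concert -- normalising by the \emph{nearest} boundary point $\hat P_m$, using the energy bound to make $V$ of finite energy so that the symmetry classification applies to the reflected solution $\hat V$, and the observation that the surviving critical point $(0',\ell)$ must then be the centre, which simultaneously identifies the centre as $0$ and yields $\ell=0$. Once this is in place, the $C^2$ convergence of the reflected functions in the last step is routine after the Neumann condition (and its tangential derivatives) is invoked, but it does need to be checked with some care because the reflected metric is only Lipschitz across the hypersurface.
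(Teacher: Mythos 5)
Your argument is correct and follows essentially the same route as the paper: even reflection of the blown-up solution across the flat boundary of a Fermi chart, identification of the radial limit profile, and Lemma \ref{L6.3} to exclude the mirror critical point produced by the reflection. The only differences are cosmetic --- you centre the chart at the nearest boundary point and force $d_m/\eps_m\to 0$ via Gidas--Ni--Nirenberg so that the contradiction lives in a small ball around $y_m$, whereas the paper keeps the rescaled distance $\theta_k$ merely bounded and derives the contradiction from uniqueness of the local maximum in a fixed large ball $B_R$.
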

 \begin{proof}
  Let $\eps_k \downarrow 0$ be a decreasing sequence such that $P_k :=P_{\eps_k}\in\M$. From lemma (\ref{L6.1}) we have $P_k\goto\tilde{P}$(say)
  $\in\pl\M$(up-to a subsequence). Take the Fermi co-ordinate $x_{\tilde{P}}^\pl$ on a neighborhood of $\tilde{P}$ and let $(x_{\tilde{P}}^\pl)^{-1}$
  is defined on a set containing the closed half-ball $\overline{B_+(0,2\eta)}, \ \eta>0$ and $q_k:=(x_{\tilde{P}}^\pl)^{-1}(P_k)\in B_+(0,\eta)$ 
  for all $k$. Let $v_k(y):=u_{\eps_k}((x_{\tilde{P}}^\pl)^{-1}(y))$ for $y\in \overline{B_+(0,2\eta)}$. Extend $v_k$ to all of $\overline{B(0,2\eta)}$
  by
  \[\tilde{v}_k(y) = 
     \begin{cases}
      v_k(y),&\text{if } y \in \overline{B_+(0,2\eta)}\notag \\
      v_k(y'-y_{2N+1}),&\text{if } y \in \overline{B_-(0,2\eta)}\notag\\
      \end{cases}
      \]
  Define $w_k(z)=\tilde{v}_k(q_k+\eps_kz)$ for $z\in \overline{B(0,\frac{\eta}{\eps})}$. Let $q_k=(q'_k, \theta_k\eps_l), \ q'_k\in \R^{2N}$ and 
  $\theta_k>0$. Then from (\ref{a6.3}) $\theta_k$ is bounded. Then it can be easily shown that $w_k\goto W$ in $C^2_{loc}$ where $W$ satisfies
  \be   -\De W+\frac{W}{|s(\tilde{P})|^\eta}-\frac{f(W)}{|s(\tilde{P})|^\eta}=0 \mbox{ in } \R^{2N+1} \label{a6.15}\ee
 Let $R>0$ be sufficiently large and define $\eps_R=C_0exp(-R/2)$. Then $\exists$ $k_R$ such that
 \begin{align}
  ||w_k-W||_{C^2(\overline{B_{4R}})}\le\eps_R \mbox{ for } k\ge k_R\label{a6.7}
 \end{align}
 We choose $R>\eta_k, \forall k$. 
 
 Now choose $c,d (0<c<d)$ such that $W''(r)<0$ for $r\in [0,c]$ (as $W''(0)<0$) and $W(d)<1$. Since $W'<0$ for $r>0$ one sees $C_*:=min\{|W'(r)|
 |r\in [c,d]\}>0$. If $c\le |z|\le d$, the from (\ref{a6.7}) we have 
 \begin{align}
  |\grad w_k(z)|\ge |\grad W(z)|-|\grad w_k(z)-\grad W(z)|\ge C_*-\eps_R>0\notag
 \end{align}
 provided $C_*>\eps_R$. Apply lemma (\ref{L6.3}) in the ball $\overline{B_b}$ we conclude that $z=0$ is only maximum point of $w_k$ in $B_b$. If 
 $z_k$ is a local maximum point of $w_k$ in $B_R$ then $w_k(z_k)\ge \overline{u}\equiv 1$. choosing $R$ so large that $\eps_R<1-w(b)$ one has if 
 $|z|>b$ then $w_k(z)=W(z)+\eps_R<\overline{u}\equiv 1$. Hence $z_k\in B_b$. Consequently $z_k=0$.
 Now if $\theta_k>0, \ \forall k$ then by definition of $\tilde{v_k} \ q_k^*=(q'_k,-\theta_k\eps_k)$ is also a local maximum of $\tilde{v_k}$ and 
 hence $(0,-\theta_k)$ is another local maximum point of $w_k$ in $B_R$, which is contradictory. So $\theta_k=0$ for $k$ large enough. $\Box$
 \end{proof}
 
 Now let $P_k\in\pl\M$ be a local max of $u_\eps$. Take the Fermi co-ordinate $x_{P_k}^\pl$ around $P_k$ such that $x_{P_k}^\pl$ maps 
 $B_g(P_k,2\delta)$ onto a half ball $B_+(0,2\delta)\cup (\pl\R^{2N+1}\cap B(0,2\delta)) =B$ (say) diffeomorphically. Define $\phi \in C^\infty(\R)$
 such that $\phi(r)=0$ if $r>2\delta$, 1 if $r<\delta$ and define
 \begin{align}
  v_{\eps_k}(x)=\phi(|x|)u_{\eps_k}\Big((x_{P_k}^\pl)^{-1}(\eps_k x)\Big) \mbox{ in } \frac{1}{\eps_k}B\label{a6.8}
 \end{align}
 It can be easily shown that $v_{\eps_k}\goto V$ in $C^2_{loc}(\R^{2N+1}_+)$ with $V\in(\R^{2N+1}_+)$. Clearly $V\ge 0$ and $V$ satisfies 
 (\ref{E2.3}) with $P_0$ lies on the same boundary where $P_k$ lies and $\mu=(0,...0,1)$. Also we have for $\eps_R:=C_0e^{-R}$ with some constant
 $C_0, \ \exists$ integer $k_R$ such that for $k>k_R$ we have 
 \begin{align}
  ||v_{\eps_k}-V||_{C^2(\overline{B_+(0,2R)})}\le\eps_R \mbox{ for } k\ge k_R\label{a6.9}
 \end{align}
 \begin{lemma}\label{L6.4}
  If $u_\eps$ attains a local maxima at $x_0\in \M$ the $\exists$ a positive constant $\eta_0$ independent of $x_0$ and $\eps$ such that 
  $u_\eps(x)\ge \eta_0$ for $x\in B_g(x_0,\eps)\cup \M$ provided $\eps$ sufficiently small
 \end{lemma}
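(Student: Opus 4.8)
The plan is to blow up equation (\ref{E2.1}) at the local maximum point at the scale $\eps$, to identify the rescaled solutions as converging to the profile $V$ already produced in (\ref{a6.8})--(\ref{a6.9}), and then to read off the desired lower bound from the strict positivity of $V$. The uniformity of $\eta_0$ in $x_0$ and $\eps$ will be enforced by running the argument as a compactness--contradiction, so that the only genuinely quantitative input needed is the convergence estimate (\ref{a6.9}) with constants that do not depend on the location of the peak.

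First, some reductions. By Lemma \ref{L6.2}, for $\eps$ small every local maximum $x_0$ of $u_\eps$ lies on $\pl\M$, so I may assume $x_0\in\pl\M$ and use the Fermi chart $x^\pl_{x_0}$ and the rescaled function $v_\eps$ of (\ref{a6.8}) centred at $x_0$; on a fixed half-ball $\overline{B_+(0,R)}$ one has $v_\eps(z)=u_\eps\big((x^\pl_{x_0})^{-1}(\eps z)\big)$ once $\eps$ is small. I also record that $u_\eps(x_0)\ge 1$ at any local maximum: there $\De_g u_\eps(x_0)\le 0$ (using the Neumann condition when $x_0\in\pl\M$), so (\ref{E2.1}) forces $u_\eps(x_0)^{p-1}\ge 1$; in particular $v_\eps(0)\ge 1$ for all $\eps$.

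Now argue by contradiction. If the conclusion fails there are $\eps_k\downarrow 0$, local maxima $x_k\in\pl\M$ of $u_{\eps_k}$, and points $y_k\in B_g(x_k,\eps_k)$ with $u_{\eps_k}(y_k)<1/k$. Fix $R_0$ (say $R_0=2$) so that for all large $k$ the set $\eps_k^{-1}x^\pl_{x_k}\big(B_g(x_k,\eps_k)\big)$ lies in $B_+(0,R_0)$ --- possible because the Fermi-chart metric coefficients are uniformly close to Euclidean as $\eps_k\to 0$, by compactness of $\overline{\M}$ --- and let $z_k\in B_+(0,R_0)$ be the image of $y_k$, so $v_{\eps_k}(z_k)=u_{\eps_k}(y_k)<1/k$. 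By (\ref{a6.9}), after passing to a subsequence $v_{\eps_k}\to V$ in $C^2(\overline{B_+(0,R_0)})$, where $V\ge 0$ solves the limit equation (\ref{E2.3}) on $\R^{2N+1}_+$ with $\mu=(0,\dots,0,1)$ and $V(0)=\lim_k u_{\eps_k}(x_k)\ge 1$. Passing to a further subsequence $z_k\to\tilde z\in\overline{B_+(0,R_0)}$, and by uniform convergence $V(\tilde z)=\lim_k v_{\eps_k}(z_k)=0$. But $-\De V+V=f(V)\ge 0$ in $\R^{2N+1}_+$ with $V\ge 0$ and $V(0)\ge 1>0$, so the strong maximum principle --- together with the Hopf lemma and the Neumann condition, which preclude a zero of $V$ on the flat boundary --- gives $V>0$ on all of $\overline{\R^{2N+1}_+}$, whence $V(\tilde z)>0$, a contradiction. (Alternatively, the argument of Lemma \ref{L6.2} shows $0$ is the only critical point of $V$, so $V$ is, after the scaling $x\mapsto\sqrt{\kappa}\,x$, an element of $\B$, and the Proposition on $\B$ makes it radially decreasing with $V(\tilde z)\ge V(R_0)>0$.)

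It follows that there exist $\eta_0>0$ and $\eps_0>0$ with $v_\eps\ge\eta_0$ on $\overline{B_+(0,R_0)}$ for every local maximum $x_0$ of $u_\eps$ and every $\eps<\eps_0$; pulling back through $x^\pl_{x_0}$ yields $u_\eps\ge\eta_0$ on $B_g(x_0,\eps)$, and the form of the contradiction argument makes $\eta_0$ independent of $x_0$, of $\eps$, and of the chosen mountain-pass solution. The delicate point worth spelling out --- and, I expect, the main obstacle --- is precisely the uniformity of (\ref{a6.9}) over $\pl\M$: this rests on a uniform $L^\infty$ bound for $u_\eps$ (obtained from the energy bound $\eps^{-(2N+1)}c_\eps\le C$ of (\ref{a6.2}) and subcriticality $1<p<2^*-1$ by a standard blow-up/Moser iteration argument, cf. \cite{MR1115095}) and on the uniform ellipticity and $C^1$-boundedness over the compact boundary $\pl\M$ of the Fermi-chart coefficients $g^{ik},\sqrt{|g|}$ and of the weight $|s|^{-\eta}$ (which stays between two positive constants since $s$ ranges in a compact subinterval of $(0,\infty)$). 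That uniform elliptic input, rather than the blow-up limit itself, is where the real work lies.
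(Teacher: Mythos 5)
Your argument is correct, but it takes a noticeably different (and heavier) route than the paper, whose entire proof is ``easily obtained by Harnack inequality (see Ni--Takagi).'' The intended argument is purely local and quantitative: at a local maximum one has $u_\eps(x_0)\ge 1$ exactly as you observe; rescaling by $\eps$ (after an even reflection across $\pl\M$ in Fermi coordinates when $x_0$ is a boundary point), the function $w(y)=u_\eps$ composed with the rescaled chart satisfies a uniformly elliptic equation $\mathrm{div}(a\grad w)=c(y)w$ on a fixed Euclidean ball $B(0,2)$, with $\|c\|_{L^\infty}$ controlled by the uniform $L^\infty$ bound on $u_\eps$ and the two-sided bounds on $|s|^{-\eta}$; the Harnack inequality then gives $\inf_{B(0,1)}w\ge C^{-1}\sup_{B(0,1)}w\ge C^{-1}u_\eps(x_0)\ge C^{-1}=:\eta_0$, with $C$ depending only on the ellipticity constants, the dimension and $\|c\|_\infty$ --- hence automatically uniform in $x_0$ and $\eps$, with no compactness or contradiction needed. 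Your compactness route instead passes through the $C^2_{\mathrm{loc}}$ convergence (\ref{a6.9}) to a limit profile and its strict positivity (strong maximum principle plus Hopf/reflection); this is valid and not circular, since (\ref{a6.9}) is established before the lemma, but it is non-quantitative (the constant $\eta_0$ arises only by contradiction), it needs the extra reduction via Lemma \ref{L6.2} to place $x_0$ on $\pl\M$, and it invokes machinery (classification of the blow-up limit) that the Harnack argument bypasses; note also the small slip that the limit of $v_{\eps_k}$ satisfies (\ref{E2.2}) with the weight $\kappa=|s(\tilde P)|^\eta$ rather than (\ref{E2.3}), though this does not affect the maximum-principle step. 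Your closing remark correctly identifies the real content common to both proofs: the uniform $L^\infty$ bound on $u_\eps$ and the uniform control of the chart coefficients and of $|s|^{-\eta}$ over the compact boundary.
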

 \begin{proof}
  Easily obtained by Harnack Inequality.(See Ni, Takagi (\cite{MR1219814})).
 \end{proof}
 \begin{lemma}\label{L6.5}
  $u_\eps$ can have only one local maxima.
 \end{lemma}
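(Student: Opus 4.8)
The statement to be proved is that the mountain-pass solution $u_\eps$ has a unique local maximum for $\eps$ small. The natural strategy, following Ni--Takagi, is a contradiction argument combined with an energy-concentration count. Suppose $u_\eps$ has two distinct local maxima $P_\eps^1, P_\eps^2 \in \M$. By Lemma \ref{L6.1} and Lemma \ref{L6.2}, for $\eps$ small both $P_\eps^1$ and $P_\eps^2$ lie on $\pl\M$. Around each $P_\eps^i$ introduce the Fermi chart $x_{P_\eps^i}^\pl$ and the rescaled functions $v_{\eps}^i$ as in \eqref{a6.8}; by the blow-up analysis preceding the lemma, $v_\eps^i \goto V^i$ in $C^2_{\loc}(\R^{2N+1}_+)$ where each $V^i \in \B$ solves \eqref{E2.3} (equivalently \eqref{E2.2} with the appropriate $\kappa$). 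The key dichotomy is on $\mathrm{dist}_g(P_\eps^1, P_\eps^2)/\eps$.

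First I would treat the case $\mathrm{dist}_g(P_\eps^1, P_\eps^2)/\eps \goto \infty$. Then the rescaled supports around $P_\eps^1$ and $P_\eps^2$ separate, so by the exponential decay estimate in the Proposition for $\B$ the two bumps contribute essentially disjointly to the energy. Using the change of variables in \eqref{a5.2}--\eqref{a5.7}, one gets $\eps^{-(2N+1)}\Gamma_\eps(u_\eps) \ge \Gamma(V^1) + \Gamma(V^2) - o(1) \ge 2\,\overline{\lim}_{\eps\goto 0}\,\eps^{-(2N+1)}c_\eps - o(1)$ by the lower bound $\Gamma(V^i)\ge \overline{\lim}\,\eps^{-(2N+1)}c_\eps$ available from the argument in Lemma \ref{L6.1}. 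But $\eps^{-(2N+1)}\Gamma_\eps(u_\eps) = \eps^{-(2N+1)}c_\eps$, contradicting \eqref{a6.2} (which gives $\overline{\lim}\,\eps^{-(2N+1)}c_\eps \le \Gamma(V)$ for any $V\in\B$, hence the mountain-pass level has the single-bump value). To make the disjoint-energy claim rigorous one cuts off $u_\eps$ by $\phi_\gamma$-type functions centered at the two points, estimates the cross terms via the exponential tail bound, and uses Lemma \ref{L6.4} to keep $u_\eps$ bounded below near each maximum so that no energy is lost in between.

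It remains to rule out $\mathrm{dist}_g(P_\eps^1, P_\eps^2)/\eps \le C$. In that case, after passing to a subsequence, both maxima sit inside a single Fermi chart and, rescaling by $\eps$ about $P_\eps^1$, the limit profile $V\in\B$ is radially symmetric about the origin with $V'(r)<0$ for $r>0$ and $V''(0)<0$. The rescaled function $v_\eps^1$ converges to $V$ in $C^2$ on large balls \eqref{a6.9}, and a second local maximum at bounded rescaled distance would produce, in the limit, a critical point of $V$ at some $z\neq 0$ — impossible since $\grad V(z)\neq 0$ there. More carefully: choose $0<c<d$ with $W''<0$ on $[0,c]$ and $W(d)<\bar u\equiv 1$, set $C_*=\min_{[c,d]}|W'|>0$; by \eqref{a6.9}, $|\grad v_\eps^1(z)|\ge C_* - \eps_R>0$ for $c\le|z|\le d$ once $R$ is large, and Lemma \ref{L6.3} (applied in the ball $\overline{B_c}$ to $\psi=v_\eps^1$, $\phi=W$) forces $z=0$ to be the only critical point of $v_\eps^1$ in $B_c$; for $|z|>c$ one has $v_\eps^1(z)=W(z)+O(\eps_R)<1\le v_\eps^1(P_\eps^2\text{-rescaled})$, so a second local max cannot lie there either. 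Hence the rescaled $P_\eps^2$ must coincide with the rescaled $P_\eps^1$, a contradiction.

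The main obstacle is the first case: making precise that when the two maxima drift apart on the $\eps$-scale the energy is genuinely \emph{additive} to leading order. This needs the exponential decay of $V$ and $\grad V$ (from the Proposition on $\B$) to control the overlap, the lower energy bound $\eps^{-(2N+1)}\Gamma_\eps(u_\eps)\ge \sum_i \Gamma(V^i)+o(1)$ built by localizing with cutoffs as in \eqref{a5.1}--\eqref{a5.7}, and Lemma \ref{L6.4} to prevent the intermediate region from absorbing a non-negligible fraction of the energy; combined with \eqref{a6.2} this closes the argument.
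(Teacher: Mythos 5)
Your skeleton matches the paper's: both maxima are pushed to $\pl\M$ by Lemmas \ref{L6.1}--\ref{L6.2}, the case $\mathrm{dist}_g(P_1,P_2)/\eps\le C$ is excluded by the Ni--Takagi non-degeneracy argument (Lemma \ref{L6.3} applied to the blow-up profile, exactly as in the proof of Lemma \ref{L6.2}), and the separated case is excluded by playing a lower bound on $\eps^{-(2N+1)}c_{\eps}$ against the mountain-pass upper bound (\ref{a6.2}). The one genuine difference is in the key quantitative step for the separated case. You aim for full energy additivity, $\eps^{-(2N+1)}\Gamma_\eps(u_\eps)\ge \Gamma(V^1)+\Gamma(V^2)-o(1)\ge 2\Gamma(W)-o(1)$, which requires localizing with cutoffs and controlling cross terms via the exponential decay of the profiles --- you correctly identify this as the main obstacle. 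The paper sidesteps it: writing $c_{\eps_m}=(\tfrac12-\tfrac{1}{p+1})\int_\M u_{\eps_m}^{p+1}|s(q)|^{-\eta}\,dv_g$, it extracts the full contribution $\Gamma(V)$ only from the ball $B_g(P_1,R\eps_m)$ around the first maximum, and from the second maximum it takes merely the crude Harnack bound of Lemma \ref{L6.4}, $u_{\eps_m}\ge\eta_0$ on $B_g(P_2,\eps_m)$, which yields an extra $C_0\eps_m^{2N+1}$ with $C_0>0$; the resulting bound $\Gamma(V)+C_0-C_1R^{2N}e^{-R}$ already contradicts $\overline{\lim}\,\eps^{-(2N+1)}c_\eps\le\Gamma(W)\le\Gamma(V)$ for $R$ large. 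So the paper's route is strictly easier to execute (no overlap estimates needed), at the price of a less sharp lower bound; your route, if carried out, proves more (the level is genuinely below twice the ground-state energy) but demands the disjoint-support bookkeeping you flag. Either closes the argument, so I would only press you to either carry out the cross-term estimates or, more economically, replace the additivity claim by the one-ball Harnack contribution as the paper does.
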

 \begin{proof}
  If possible let us consider that there is a decreasing sequence $\eps_m\downarrow 0$ such that $u_{\eps_m}$ has two local maximas say $P_1$ and
  $P_2$. From the previous lemmas we have both $P_1$ and $P_2$ are on $\pl\M$ for large $m$. Also as the scaled function $\tilde{v}$ constructed in 
  lemma (\ref{L6.2}), can't have two local maximas in $B_R$ for any $R>0$, we see that $\frac{1}{\eps}dist_g(P_1,P_2)\goto \infty$ as $m\goto\infty$.
  Take the co-ordinate $x_{P_{\eps_m}}^\pl$ around $P_{\eps_m}$ and define $v_{P_{\eps_m}}$ as in (\ref{a6.8}).
  
  In the next step we shall give a lower estimate of the energy functional in order to prove the lemma. Note that
  \begin{align}
   c_{\eps_m}&=\int_{\M}\Big(\frac{u_{\eps_m}f(u_{\eps_m})}{2|s(q)|^\eta}-\frac{F(u_{\eps_m})}{|s(q)|^\eta}\Big)dv_g\notag\\
   &=\Big(\frac{1}{2}-\frac{1}{p+1}\Big)\int_{\M}\frac{u_{\eps_m}^{p+1}}{|s(q)|^\eta}dv_g\notag\\
   &=\Big(\frac{1}{2}-\frac{1}{p+1}\Big)\Big(\int_{B_1}\frac{u_{\eps_m}^{p+1}}{|s(q)|^\eta}dv_g+\int_{\M\backslash B_1}\frac{u_{\eps_m}^{p+1}}{|s(q)|^\eta}dv_g\notag\Big)
  \end{align}
  where $B_1=B_g(P_m,R\eps_m)$ and $m$ is so large that $R\eps_m<\delta$. Now
  \begin{align}
   &\int_{B_1}\frac{u_{\eps_m}^{p+1}}{|s(q)|^\eta}dv_g\notag\\
   =&\int_{B_+(0,R\eps_m)}\frac{v_{\eps_m}^{p+1}}{|s(x_{P_{\eps_m}}^\pl)^{-1}(x)|^\eta}\sqrt{|g|(x)}dx\notag\\
   =&\eps_m^{2N+1}\int_{B_+(0,R)}\frac{v_{\eps_m}^{p+1}(\eps_my)}{|s(x_{P_{\eps_m}}^\pl)^{-1}(\eps_my)|^\eta}\sqrt{|g|(\eps_my)}dx\notag\\
   =&\eps_m^{2N+1}\int_{B_+(0,R)}v_{\eps_m}^{p+1}(\eps_my)\Big(\frac{1}{|s(P_{\eps_m})|^\eta}+O(|\eps_my|)\Big)dy\notag
  \end{align}
  using the change of variable $x=\eps_my$ and using (\ref{a4.5}) and (\ref{a4.8}). Note that $v_{\eps_m}^{p+1}\ge V^{p+1}-|v_{\eps_m}^{p+1}-
  V^{p+1}|$ and we have
  \begin{align}
   &\int_{B_1}\frac{u_{\eps_m}^{p+1}}{|s(q)|^\eta}dv_g\notag\\
   \ge &\eps_m^{2N+1}\int_{B_+(0,R)}\Big(\frac{V^{p+1}}{|s(P_{\eps_m}|^\eta}-\frac{|v_{\eps_m}^{p+1}-V^{p+1}|}{|s(P_{\eps_m})|^\eta}\Big)dx
   +O(|v_{\eps_m}^{p+1}-V^{p+1}|)R^{2N+1}\notag\\
   \ge &\eps_m^{2N+1}\int_{B_+(0,R)}\frac{V^{p+1}}{|s(P_{\eps_m})|^\eta} dy-C_1\eps_RR^{2N+1}\notag\\
   \ge &\eps_m^{2N+1}\int_{\R_+^{2N+1}}\frac{V^{p+1}}{|s(P_{\eps_m})|^\eta} dy-C_1\eps_RR^{2N+1}-C_2\eps\notag
  \end{align}
  Using decay estimate of $V$. On the other hand from lemma (\ref{L6.4}) we have 
  \begin{align}
   \int_{\M\backslash B_1}\frac{u_{\eps_m}^{p+1}}{|s(q)|^\eta}dv_g \ge \int_{B_g(P'_{\eps_m},\eps_m)}\frac{u_{\eps_m}^{p+1}}{|s(q)|^\eta}dv_g
   \ge \eta_0 \int_{B_g(P'_{\eps_m},\eps_m)}dv_g=C_0\eps^{2N+1}\notag
  \end{align}
  So finally we have the lower estimate as
 \begin{align}
  c_{\eps_m}\ge\eps_m^{2N+1}\Big(\Gamma(V)+C_0-C_1 r^{2N}e^{-R}+C_2\eps_m\Big)\label{a6.10}
 \end{align}
  Now define $Z_{\eps,t}^\gamma$ as in section 4 by taking $P_0=P_{\eps_m}$ and $W$ to be a least energy solution of (\ref{E2.2}) with 
  $\mu=(0,...,0,1)$. then from (\ref{a6.2}) we get 
  \begin{align}
  \overline{lim}_{\eps\goto 0} \eps^{-(2N+1)}c_\eps \le \Gamma(W)\label{a6.11}
  \end{align}
  Now $\Gamma(V)\ge\Gamma(W)$ implies (\ref{a6.10}) and (\ref{a6.10}) are contradictory. Hence $u_\eps$ can have only one maximum point which lies
  on the boundary point of $\M$ for $\eps$ small enough.
  \end{proof}
  
  Now using the estimate (\ref{a6.9}), the decay estimate of $V$ in (\ref{a6.9}) and standard comparison principle we have 
  \begin{align}
   u_\eps(x)+|\grad u_\eps(x)|\le C\exp(-\frac{c}{\eps} dist_g(c,P_\eps))\label{a6.12}
  \end{align}
  for some constants $C,c>0$.
  \begin{prop}
   For $\eps$ small enough the following holds \\
   (i) for $\eta>0$, i.e. $\alpha<\frac{2}{2N-1}$, we have $s(P_\eps)=(\frac{2N}{2N-1})^{\frac{1}{2N-1}}a^{\frac{2N}{2N-1}}$\\
   (ii) for $\eta\le 0$, i.e. $\alpha\ge\frac{2}{2N-1}$, we have $s(P_\eps)=(\frac{2N}{2N-1})^{\frac{1}{2N-1}}b^{\frac{2N}{2N-1}}$
  \end{prop}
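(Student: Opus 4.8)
The plan is a two--scale energy comparison. The half--space energy $\Gamma(V)$ appearing in the analysis depends on the peak only through $\kappa=|s(P_0)|^\eta$: a change of variables $y=x/\sqrt{\kappa}$ in (\ref{a2.4}) (cf.\ the computation in the proof of Lemma~\ref{L6.1}) gives $\Gamma(V)=\kappa^{(2N-1)/2}\Gamma_0$, where $\Gamma_0=\int_{\R^{2N+1}_+}\big(\tfrac12|\grad U|^2+\tfrac12U^2-F(U)\big)\,dx>0$ and $U$ is the ground state of (\ref{E2.2}) at $\kappa=1$. The boundary $\pl\M$ has exactly two components, on which $s$ takes the constant values $s_a:=(\tfrac{2N}{2N-1})^{1/(2N-1)}a^{2N/(2N-1)}$ and $s_b:=(\tfrac{2N}{2N-1})^{1/(2N-1)}b^{2N/(2N-1)}$, with $s_a<s_b$; set $\Gamma_a:=s_a^{\eta(2N-1)/2}\Gamma_0$ and $\Gamma_b:=s_b^{\eta(2N-1)/2}\Gamma_0$. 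By Lemma~\ref{L6.5}, $P_\eps\in\pl\M$; pass to a subsequence with $P_\eps\to\tilde P$. The blow--up at $P_\eps$ in Fermi coordinates (cf.\ (\ref{a6.9})) produces a profile in $\mathcal{B}$ with $\kappa=|s(\tilde P)|^\eta$, and the exponential decay (\ref{a6.12}) concentrates the energy on $B_g(P_\eps,R\eps)$; the estimate behind (\ref{a6.10}) \emph{with no second peak}, followed by $R\to\infty$, gives $\liminf_{\eps\to0}\eps^{-(2N+1)}c_\eps\ge\Gamma(V_{\tilde P})$, while running the mountain--pass construction with base point on each component and using (\ref{a6.2}) together with the lemma that $c_\eps$ is independent of $p\in\pl\M$ and $V\in\mathcal{B}$ gives $\limsup_{\eps\to0}\eps^{-(2N+1)}c_\eps\le\min\{\Gamma_a,\Gamma_b\}$. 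Hence $\Gamma(V_{\tilde P})=\min\{\Gamma_a,\Gamma_b\}$, and $\Gamma(V_{\tilde P})\in\{\Gamma_a,\Gamma_b\}$ because $\tilde P$ lies on one component.

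For $\eta\neq0$ this already finishes. The map $t\mapsto t^{\eta(2N-1)/2}$ is strictly monotone: increasing if $\eta>0$, so $\Gamma_a<\Gamma_b$ and the equality above forces $s(\tilde P)=s_a$; decreasing if $\eta<0$, so $\Gamma_b<\Gamma_a$ and it forces $s(\tilde P)=s_b$. Since $\pl\M_a$ and $\pl\M_b$ are disjoint closed sets and the argument applies along every subsequence, $s(P_\eps)=s_a$ for $\eps$ small when $\eta>0$ and $s(P_\eps)=s_b$ for $\eps$ small when $\eta<0$ --- this is (i) and part of (ii). It remains to treat the borderline case $\eta=0$ ($\alpha=\tfrac2{2N-1}$), where $\kappa\equiv1$, $\Gamma_a=\Gamma_b=\Gamma_0$, and the leading order is uninformative, so one must expand once more. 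Now $|s|^{-\eta}\equiv1$, so $I_2$ drops out of (\ref{a5.7}) and $\eps^{-(2N+1)}\Gamma_\eps(Z_{\eps,t}^\gamma)=I_1(t)-\eps I_3(t)+O(\eps^2)$ uniformly in $t$ on bounded sets and in the base point; since $t=1$ is the unique, nondegenerate maximum of $I_1$ (Pohozaev identity, cf.\ the derivation of (\ref{a6.2})), maximising in $t$ moves the maximiser by $O(\eps)$ and $\max_t[I_1(t)-\eps I_3(t)]=\Gamma_0-\eps I_3(1)+O(\eps^2)$. Setting $G:=\int_{\R^{2N+1}_+}|\grad V|^2x_{2N+1}\,dx>0$, Proposition~\ref{P4.1} with $H(P_0)=-1/s(P_0)$ gives $-I_3(1)=-\tfrac{N}{N+1}H(P_0)G=\tfrac{N}{N+1}G/s(P_0)$, which is smallest when $s(P_0)$ is largest; choosing the base point on $\pl\M_b$ therefore yields $\eps^{-(2N+1)}c_\eps\le\Gamma_0+\eps\tfrac{N}{N+1}G/s_b+O(\eps^2)$. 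For the matching lower bound I would carry out the same energy expansion for the genuine solution $u_\eps$, localised at $P_\eps$ in Fermi coordinates and rescaled by $\eps$, using its $C^2_\loc$ convergence to $V\in\mathcal{B}$ with the exponential control (\ref{a6.12})--(\ref{a6.9}), the metric expansions (\ref{a4.4})--(\ref{a4.5}), the umbilicity (\ref{a4.3}), and the moment identities (\ref{a2.5})--(\ref{a2.7}); this gives $\eps^{-(2N+1)}c_\eps=\Gamma_0+\eps\tfrac{N}{N+1}G/s(P_\eps)+o(\eps)$. If $P_\eps\in\pl\M_a$ along a subsequence, then $1/s(P_\eps)=1/s_a>1/s_b$ and this contradicts the upper bound for $\eps$ small; hence $P_\eps\in\pl\M_b$, i.e.\ $s(P_\eps)=s_b$, which completes (ii).

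The step I expect to be the main obstacle is precisely this last refined lower bound: one must verify that the energy of the genuine solution, localised and rescaled near $P_\eps$, reproduces the test--function expansion (\ref{a5.7}) \emph{including} its $O(\eps^{2N+2})$ curvature term, i.e.\ that the contribution of the metric coefficients is exactly $-\eps I_3(1)|_{P_\eps}$. This requires combining the $C^2$--closeness of the rescaled solution to $V$ on fixed balls with uniform exponential decay, so as to pass to the limit both in the $O((\eps|y|)^2(\cdots))$ remainder and in the curvature integrals, and then using the identities (\ref{a2.5})--(\ref{a2.7}) to evaluate those integrals in terms of $G$ via the radial symmetry of $V$ and the umbilicity (\ref{a4.3}) of $\pl\M$. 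For $\eta\neq0$ no such second--order analysis is needed, since the leading term $\Gamma(V)=\kappa^{(2N-1)/2}\Gamma_0$ already separates the two components of $\pl\M$.
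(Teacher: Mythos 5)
Your argument for $\eta\neq 0$ is essentially the paper's own proof: the upper bound comes from the test function $Z^\gamma_{\eps,t}$ based at a point of the favourable boundary component, giving $\limsup\eps^{-(2N+1)}c_\eps\le\kappa^{(2N-1)/2}\,\Gamma_0$ with $\kappa=|s(P_0)|^\eta$, and the lower bound comes from the single-peak localisation of $u_\eps$ at $P_\eps$ (the estimate (\ref{a6.10}) without the second-peak term, i.e.\ (\ref{a6.14})); the strict monotonicity of $s\mapsto s^{\eta(2N-1)/2}$ then forces the peak onto the inner component for $\eta>0$ and the outer one for $\eta<0$. Where you genuinely diverge is the borderline case $\eta=0$ ($\alpha=\tfrac{2}{2N-1}$): the paper disposes of it in one line by quoting Byeon--Park \cite{MR2180862} (concentration at a point of maximal mean curvature, which by (\ref{a4.3}), $H=-1/s$, is the outer boundary), whereas you propose to re-derive that result via the second-order expansion $I_1-\eps I_3+O(\eps^2)$ and Proposition~\ref{P4.1}. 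Your sign bookkeeping there is right (the $\eps$-order correction is $+\eps\tfrac{N}{N+1}G/s(P_0)$, minimised at $s=s_b$), but the matching second-order \emph{lower} bound for the genuine solution --- which you correctly single out as the main obstacle and only sketch --- is precisely the hard Ni--Takagi/Byeon--Park step; as written your treatment of $\eta=0$ is therefore a plausible programme rather than a complete proof, while the paper's citation closes that case outright. In exchange, your version makes explicit the mechanism (the curvature term $I_3$) that the leading-order comparison cannot see, which the paper leaves implicit.
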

  \begin{proof}
  We know that for $\eps$ small enough $P_\eps$ is on $\pl\M$.
  
  Case 0 : For $\eta=0$ we have the result of Byeon and Park, \cite{MR2180862}. So we have that a maximum point converges to a point of $\pl\M$ which have maximum 
  mean curvature. From (\ref{a4.3}) we have $s(P_\eps)=(\frac{2N}{2N-1})^{\frac{1}{2N-1}}b^{\frac{2N}{2N-1}}$.
  
  Case I : Let $\eta>0$. Let $V$ be a least energy solution of (\ref{E2.2}) with $\mu=(0,...,0,1)$. Define $Z_{\eps,t}^\gamma$ as in section 4 with
  $P_0$ on the inner boundary and taking the co-ordinate around $P_0$. Then using (\ref{a5.7}) and the Pohozaev identity we get 
  \begin{align}
   \eps^{-(2N+1)}\Gamma_\eps(Z_{\eps,t}^\gamma)&\le \int_{\R^{2N+1}_+}\frac{1}{2}|\grad V|^2dy+\int_{\R^{2N+1}_+}\Big(\frac{V^2(y)}{2\kappa}-\frac{F(V)}{\kappa}\Big)dy + C\eps\notag\\
   &=(\kappa)^{\frac{2N-1}{2}}I(U) + C\eps\label{a6.13} 
  \end{align}
  where $\kappa=(\frac{2N}{2N-1})^{\frac{1}{2N-1}}a^{\frac{2N}{2N-1}}$, $I(U)=\int_{\R^{2N+1}_+}(\frac{1}{2}|\grad U|^2)+\frac{U^2(y)}{2}-F(U))dy$ 
  and $U$ is a least energy solution of (\ref{E2.3}). Now if possible let 
  $s(P_\eps)=(\frac{2N}{2N-1})^{\frac{1}{2N-1}}b^{\frac{2N}{2N-1}}$. Then from (\ref{a6.10}) we have
  \begin{align}
   \eps^{-(2N+1)}c_{\eps_m}\ge\Gamma(W)-C_1 r^{2N}e^{-R}+C_2\eps_m\notag
  \end{align}
  
  where $W$ solves (\ref{E2.2}) with $s(P_0)$ replaced by $\tilde{\kappa}:=s(P_\eps)$. Then as before we can easily show
  \begin{align}
   \eps^{-(2N+1)}c_{\eps_m}\ge (\tilde{\kappa})^{\frac{2N-1}{2}}I(\tilde{U})-C_1 r^{2N}e^{-R}+C_2\eps_m\label{a6.14} 
  \end{align}
  Where $\tilde{U}$ is a solution of (\ref{E2.3}). Note that for $\eta>0$ $\tilde{\kappa}>\kappa$. Also noting that  $U$ is a least energy 
  solution of (\ref{E2.3}) we reach two contradictory inequalities (\ref{a6.13}) and (\ref{a6.14}) when $R$ is large enough.
  
  Case II : Follows similarly as above by taking $P_0$ on the outer boundary for the test function. 
 
  \end{proof}
  \textit{Proof of theorem (\ref{Th0.1})}:
  \begin{proof}
   It is clear that one point concentrating solutions of equation (\ref{E2.1}) can be lifted to $S^1$ concentrating solutions of (\ref{E0.1}) with
   the required properties.
  \end{proof}


\section{Appendices}
 
\subsection{Appendix A : Hopf fibration, Fubini Study metric and the Warped product.}
Here we shall discuss some well known facts about hops fibration on $S^{2N+1}$. All the details can be found in [\cite{MR2243772}].
The $2N+1$ sphere $S^{2N+1}$ can be represented as $S^{2N+1}=I\times(S^{2N-1}\times S^1)$. Also the metric $g$ in (\ref{0.1}) can be represented by
another representation (doubly warped product metric) as
\begin{align}\label{Ap A.1}
 g=dt_1^2+sin^2t_1dS_{2N-1}^2+cos^2t_1dS_1^2
\end{align}
The unit circle acts on both the spheres by complex scalar multiplication as, for $\lambda\in S^1$ and $(z,w)\in S^{2N-1}\times S^1$ we have 
$\lambda\cdotp(z,w)=(\lambda z, \lambda w)$, which induces a fixed point free isometric action on the space. The quotient map
\begin{align}
 I\times(S^{2N-1}\times S^1)\longrightarrow I\times((S^{2N-1}\times S^1)/S^1)\label{Ap A.2}
\end{align}
can be made into Riemann submersion by choosing an appropriate metric on the quotient space. To find this metric we split the canonical metric 
\begin{align}
 dS_{2N-1}=h+g\notag
\end{align}
where $h$ corresponds to the metric along the Hopf fiber and $g$ is the orthogonal complement. Then we got the generalized Hopf fibration 
$S^{2N+1}\rightarrow \cpn$, defined by 
\begin{align}
 (0, \frac{\pi}{2})\times(S^{2N-1}\times S^1) \longrightarrow (0, \frac{\pi}{2})\times((S^{2N-1}\times S^1)/S^1)\label{Ap A.6}
\end{align}
as a Riemann submersion and the corresponding metric is given by 
\begin{align}\label{Ap A.3}
 g_{\cpn}=dt_1^2+sin^2t_1(g+cos^2t_1h)
\end{align}

Now let us take the manifolds $I'$ and $\cpn$ with the metrics $ds^2$ and $g_\cpn$ respectively where $I'$ is the interval 
$(\frac{2N}{2N-1})^{\frac{1}{2N-1}}(a^{\frac{2N}{2N-1}}, b^{\frac{2N}{2N-1}})$. Consider the product manifold $\M=I'\times_f\cpn$.
with the warping function $f=\frac{2N}{2N-1}s$. The warped product metric $g$ is of the form
\begin{align}
 g&=s^*(ds^2)+(f\circ r)^2\sigma^*(g_{\cpn})\notag\\
 &=ds^2+(\frac{2N}{2N-1})^2s^2g_{\cpn}\label{Ap A.4}
\end{align}
Where $s$ and $\sigma$ are projections from $\M$ onto $I'$ and $\cpn$ respectively. For $v \in H^1(\M)$ we have 
\begin{align}
 |\grad_gv|^2=|v_s|^2 + (\frac{2N}{2N-1}s)^{-2}|\grad_{\cpn}v|^2\label{Ap A.5}
\end{align}

 \subsection{Proof of lemma \ref{L2.1}}
 \begin{proof}
  If possible let $z^0=z(r^0,t_1^0,...,t_n^0,\theta_1^0,...,\theta_{N+1}^0)$ be a fixed point of $T_\tau$. So we have 
  \begin{align}
   z(r^0,t_1^0,...,t_n^0,\theta_1^0,...,\theta_{N_1}^0)=z(r^0,t_1^0,...,t_n^0,\theta_1^0+\tau,...,\theta_{N+1}^0+\tau)\notag
  \end{align}
  for all $\tau \in [0,2\pi)$. Now equating the last two coordinates we get $cos(t_1)=0$. So $sin(t_1)\neq 0$. Now compare $x_{2N}$ and use the fact 
  that $sin(t_1)\neq 0$ and we get $cos(t_2)= 0$. In the same way we shall get $cos(t_j)= 0$ for all $j=1,2,...,n$. So $sin(t_j)\neq 0$ for all 
  $j=1,2,...,n$. Now comparing the first coordinates we arrived at contradiction. So $T_\tau$ is a fixed point free group action. 
 \end{proof}

 \subsection{Proof of lemma \ref{L2.2}}
 \begin{proof}
  Let $z=z(r,t_1,...,t_n,\theta_1,...,\theta_{N+1})$ and $z=z(r,t_1,...,t_n,\theta_1^{'},...,\theta_{N+1}^{'})$ such that $\theta_i-\theta_{N+1}
  =\theta_i^{'}-\theta_{N+1}^{'}=\sigma_i$. We need to show that $u(z)=u(z^{'})$. Let $\theta=\theta_1^{'}-\theta_1$. Then we have $\theta_1^{'}=
  \theta_1+\theta$ for some $\theta\in [0,2\pi)$. Hence $\theta_{N+1}^{'}=\theta_1^{'}-\theta_1+\theta_{N+1}=\theta+\theta_{N+1}$ and we get
  $\theta_{N+1}^{'}-\theta_{N+1}=\theta$. Similarly we can show $\theta_{i}^{'}-\theta_{i}=\theta_{N+1}^{'}-\theta_{N+1}=\theta$. So $v$ is well 
  defined fro $u\in H_\sharp^1(A)$.
 \end{proof}
 
 \subsection{Appendix B : The reduction.}
 In this polar co-ordinate the energy functional $J_\eps(u)$ of (\ref{E0.1}) takes the form
 \begin{align}
  J_\eps(u)&=\int_{A}(\frac{\eps^2}{2}|\grad u|^2 + |x|^\alpha\frac{u^2}{2}-|x|^\alpha F(u))dx\notag\\
  &=\int_{I\times S^{2N+1}}[\frac{\eps^2}{2}(u_r^2+\frac{1}{r^2}|\grad_{S^{2N+1}}u|^2)+r^\alpha\frac{u^2}{2}-r^\alpha F(u)]r^{2N+1}drd\sigma_{S^{2N+1}}\notag
 \end{align}
 where I=[a,b]. First we shall do the reduction on the first part of the above integral as
 \begin{align}
  &\int_{I\times S^{2N+1}}(u_r^2+\frac{1}{r^2}|\grad_{S^{2N+1}}u|^2)r^{2N+1}drd\sigma_{S^{2N+1}}\notag\\
  =&\int_I\{\int_{S^{2N+1}}(u_r^2+\frac{1}{r^2}|\grad_{S^{2N+1}}u|^2)\sigma_{S^{2N+1}}\}r^{2N+1}dr\notag
 \end{align}
Now use the change of variable $\phi(r,t_1,...,t_n,\theta_1,...,\theta_{N+1})=(r,t_1,...,t_n,\psi_1,...,\psi_{N})$, as defined above and we get
\begin{align}
 &\int_{S^{2N+1}}(u_r^2+\frac{1}{r^2}|\grad_{S^{2N+1}}u|^2)d\sigma_{S^{2N+1}}\notag\\
 =&2\pi\int_{\cpn}(v_r^2+\frac{1}{r^2}|\grad_{\cpn}v|^2)dV_{\cpn}\notag
\end{align}

So we have
\begin{align}
 &\int_{A}|\grad u|^2dx\notag\\
 =&2\pi\int_I\{\int_{\cpn}(v_r^2+\frac{1}{r^2}|\grad_{\cpn}v|^2)dV_{\cpn}\}r^{2N+1}dr\label{Ap C.1}
\end{align}

let $r=(\frac{2N-1}{2N})^{\frac{1}{2N}}s^{\frac{2N-1}{2N}}$. Then 
\begin{align}
 dr&=(\frac{2N-1}{2N})^{\frac{2N+1}{2N}}s^{\frac{-1}{2N}}\notag\\
 r^{2N+1}dr&=(\frac{2N-1}{2N})^{\frac{2N+11}{N}}s^{2N-\frac{1}{N}}ds\notag
\end{align}
Also $|v_r|^2=(\frac{2N}{2N-1})^{\frac{2N+1}{N}}s^{\frac{1}{N}}|v_s|^2$ and $\frac{1}{r^2}
=(\frac{2N}{2N-1})^{\frac{1}{N}}s^{-2}s^{\frac{1}{N}}$. Then finally we get
\begin{align}
 &\int_{A}|\grad u|^2dx\notag\\
 =&2\pi\int_{I'}\{\int_{\cpn}(v_s^2+(\frac{2N}{2N-1}s)^{-2}|\grad_{\cpn}v|^2)dV_{\cpn}\}s^{2N}ds\label{Ap C.2}
\end{align}
 where $I'=(\frac{2N}{2N-1})^{\frac{1}{2N-1}}(a^{\frac{2N}{2N-1}}, b^{\frac{2N}{2N-1}})$. Also using the same change of variables we get
 \begin{align}
  &\int_{A}(|x|^\alpha\frac{u^2}{2}-|x|^\alpha F(u))dx\notag\\
  =&\int_I\{\int_{S^{2N+1}}(\frac{u^2}{2}-F(u))d\sigma_{S^{2N+1}}\}r^{2N+\alpha+1}dr\notag\\
  =&2\pi\int_{I}\{\int_{\cpn}(\frac{v^2}{2}-F(v))dV_{\cpn}\}r^{2N+\alpha+1}dr\notag\\
  =&2\pi\int_{I'\times\cpn}(\frac{2N-1}{2N})^{\frac{4N+2+\alpha}{2N}}s^{\frac{2N\alpha-2-\alpha}{2N}}(\frac{v^2}{2}-F(v))s^{2N}dsdV_{\cpn}\}\label{Ap C.3}
 \end{align}
 So finally

 \begin{align}
  J_\eps(u)&=2\pi\int_{I'\times\cpn}[\frac{\eps^2}{2}(v_s^2+(\frac{2N}{2N-1}s)^{-2}|\grad_{\cpn}v|^2)\notag\\
  &+(\frac{2N-1}{2N})^{\frac{4N+2+\alpha}{2N}}(\frac{v^2}{2s^{\frac{\alpha+2-2N\alpha}{2N}}}-\frac{F(v)}{s^{\frac{\alpha+2-2N\alpha}{2N}}})]
  s^{2N}dsdV_{\cpn}\label{Ap C.4}
 \end{align}

\bibliographystyle{plain}

\end{document}